\newtheorem{theorem}{Theorem}
\newtheorem*{theorem*}{Theorem}
\newtheorem{lemma}[theorem]{Lemma}
\newtheorem{proposition}[theorem]{Proposition}
\newtheorem{claim}[theorem]{Claim}
\newtheorem{corollary}[theorem]{Corollary}
\newtheorem{conjecture}[theorem]{Conjecture}
\newtheorem{definition}{Definition}
\newcommand{\vs}{\vspace{3mm}}
\newcommand{\hs}{\hspace{1mm}}
\newcommand{\R}{\mathbb{R}}
\newcommand{\C}{\mathbb{C}}
\newcommand{\Z}{\mathbb{Z}}
\newcommand{\Q}{\mathbb{Q}}
\newcommand{\mc}{\mathcal}
\newcommand{\ep}{\epsilon}
\newcommand{\al}{\alpha}
\newcommand{\weakly}{\rightharpoonup}
\newcommand{\lan}{\langle}
\newcommand{\ran}{\rangle}
\def\cc{\curvearrowright}
\def\cH{\mathcal{H}}
\def\bH{b\mathcal{H}}
\def\R{\mathbb{R}}
\def\Z{\mathbb{Z}}
\def\T{\mathbb{T}}
\newcommand{\wade}[1]{{\color{green}{ Wade: #1}}}
\title{Additive Conjugacy and the Bohr Compactification of Orthogonal Representations}
\author{Zachary Chase,  Wade Hann-Caruthers, and Omer Tamuz}
\address{California Institute of Technology, Pasadena, CA, 91125}
\thanks{Omer Tamuz was supported by a grant from the Simons Foundation
  (\#419427), a Sloan research fellowship, a BSF award (\#2018397),
  and an NSF CAREER award (DMS-1944153).}
\begin{document}

\begin{abstract}
We say that two unitary or orthogonal representations of a finitely generated group $G$ are {\em additive conjugates} if they are intertwined by an additive map, which need not be continuous. We associate to each representation of $G$ a topological action that is a complete additive conjugacy invariant: the action of $G$ by group automorphisms on the Bohr compactification of the underlying Hilbert space. Using this construction we show that the property of having almost invariant vectors is an additive conjugacy invariant.  As an application we show that $G$ is amenable if and only if there is a nonzero homomorphism from $L^2(G)$ into $\R/\Z$ that is invariant to the $G$-action.
\end{abstract}

\maketitle



\section{Introduction}

Let $G$ be a finitely generated group. To each unitary or orthogonal representation of $G$ one can associate a probability measure preserving action---the so-called Gaussian action. Conversely, to each probability measure preserving action of $G$ one can associate the Koopman representation.  These constructions have proven to be an important connection between ergodic theory and representation theory, with many interesting applications (see, e.g., \cite{connes1980property, schmidt1981amenability, schmidt1996infinitely, glasner1997kazhdan}). 

In this paper we associate a {\em topological} action to representations, with the goal of establishing connections between the dynamical properties of the representation and the action. We use this construction to study properties of the representation that are ``additive conjugacy'' invariants; we define this notion below. As an application we derive a new characterization of amenability.

Let $G \cc^\pi \cH$ be an orthogonal representation of a finitely generated group on a separable real Hilbert space.\footnote{Note that the class of orthogonal representations includes the unitary ones;  we elaborate on this in \S\ref{sec:representations}. Thus all of our results apply to unitary representations on separable complex Hilbert spaces.} 
We associate to the representation $\pi$ the topological action of $G$ on the {\em Bohr compactification} of $\cH$. In general, the Bohr compactification $bA$ of a topological abelian group $A$ is the algebraic dual of $\hat A_d$, where the latter is the algebraic dual of $A$, equipped with the discrete topology. As we explain in \S\ref{sec:duality}, in our case of a real separable Hilbert space $\cH$, an equivalent definition is to let $\bH$ be the set of all homomorphisms (i.e., additive maps) from $\cH$ to $\T = \R/\Z$:
\begin{align}
\label{eq:bohr}
  \bH = \{\varphi \colon \cH \to \T\,\vert\, \varphi(v+w) = \varphi(v)+\varphi(w)\}.
\end{align}
Importantly, $\bH$ includes maps that are not continuous. The Bohr compactification $\bH$ is indeed compact, when endowed with the subspace topology induced from the product space $\T^\cH$. It also admits the obvious abelian group structure, which is compatible with this topology.

The group $G$ acts on $\bH$ by precomposition:
\begin{align*}
    [g\varphi](v) = \varphi(\pi_g^{-1} v).
\end{align*}
It is straightforward to verify that this action is by automorphisms of $\bH$ as a topological group. Thus the action $G \cc \bH$ is a topological algebraic action that is associated to the representation $\pi$. This action will be useful in the study of the following notion of conjugacy: 
\begin{definition}
Two representations, $G \cc^{\pi} \cH$ and $G \cc^{\pi'} \cH'$, are {\em additive conjugates} if there exists a bijection $\xi \colon \cH \to \cH'$ such that for all $v,w \in \cH$ and $g \in G$,
\begin{align*}
    \xi(v+w) = \xi(v)+\xi(w)
\end{align*}
and
\begin{align*}
    \xi(\pi_g v) = \pi'_g \xi(v).
\end{align*}
\end{definition}
That is, two representations are additive conjugates if they are intertwined by  an additive bijection. Note that this bijection need not be continuous. 

It is straightforward to check that the action on the Bohr compactification is a complete additive conjugacy invariant. That is, that $\pi$ and $\pi'$ are additive conjugates if and only if $G \cc \bH$ and $G \cc \bH'$ are conjugates, as topological algebraic actions (Claim~\ref{clm:complete-invariant}).

\subsection{Main results}
In all of our results below, $G$ is a finitely generated group, Hilbert spaces are separable and either real or complex, and  representations are, respectively, either orthogonal or unitary---unless otherwise specified.

Our main result ties an important property of a representation  with a dynamical property of its associated  topological action. Recall that $\pi$ is said to have {\em almost invariant vectors} if there exists a sequence of unit vectors $(u_n)_n$ in $\cH$ such that $\lim_n \Vert \pi_g u_n-u_n\Vert=0$ for each $g \in G$. A {\em fixed point} $x$ of a topological action $G \cc X$ is one that satisfies $gx = x$ for all $g \in G$.
\begin{theorem}
\label{thm:fixed-point}
$G \cc^\pi \cH$ has almost invariant vectors if and only if the associated action $G \cc \bH$ has a nonzero fixed point.
\end{theorem}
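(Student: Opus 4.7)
The plan is to translate the existence of a nonzero $G$-fixed point in $\bH$ into a subspace-theoretic condition on $\cH$, and then connect that condition to almost invariance by standard Hilbert space duality. Fix a finite generating set $g_1, \dots, g_k$ of $G$ and set
\[
V = \sum_{i=1}^k (\pi_{g_i} - I)(\cH) \sub \cH.
\]
As a sum of images of $\R$-linear operators, $V$ is itself an $\R$-subspace; in particular, whenever $v_0 \notin V$ one has $\Z v_0 \cap V = \{0\}$.

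The key step is to show that $\bH$ has a nonzero $G$-fixed point iff $V \neq \cH$. A $\varphi \in \bH$ is $G$-fixed iff $\varphi(\pi_g v) = \varphi(v)$ for all $g,v$, and I would first show this is equivalent to $\varphi|_V = 0$: one direction is immediate from additivity, and for the other, invariance under each generator extends to all $g \in G$ by induction on word length, using the identity $\pi_{gh} v - v = (\pi_g - I)(\pi_h v) + (\pi_h - I) v$ and the additivity of $\varphi$ (the case of inverse generators is handled by pre-composing with $\pi_{g_i}$). Given $V \neq \cH$, choose $v_0 \notin V$ and define an additive $\varphi_0 \colon V + \Z v_0 \to \T$ by $\varphi_0|_V = 0$, $\varphi_0(v_0) = 1/2 + \Z$; this is well-defined since $\Z v_0 \cap V = \{0\}$. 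Because $\T$ is divisible, hence injective as a $\Z$-module, Zorn's lemma extends $\varphi_0$ to an additive $\varphi \colon \cH \to \T$, which is a nonzero $G$-fixed element of $\bH$. The converse is immediate: any nonzero $G$-fixed $\varphi$ vanishes on $V$, so $V \neq \cH$.

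It remains to show $V \neq \cH$ iff $\pi$ has almost invariant vectors. Realize $V$ as the image of the bounded operator $T \colon \cH^k \to \cH$, $T(v_1, \dots, v_k) = \sum_i (\pi_{g_i} - I) v_i$. A direct calculation gives $T^* w = ((\pi_{g_i^{-1}} - I) w)_i$, and using $\|\pi_{g_i^{-1}} w - w\| = \|\pi_{g_i} w - w\|$ we obtain $\|T^* w\|^2 = \sum_i \|\pi_{g_i} w - w\|^2$. By the standard Hilbert space fact that $T$ is surjective iff $T^*$ is bounded below, $V = \cH$ iff there exists $c > 0$ with $\sum_i \|\pi_{g_i} w - w\|^2 \geq c \|w\|^2$ for all $w \in \cH$, iff $\pi$ has no almost invariant vectors (passing between almost invariance on a generating set and on all of $G$ is routine via the triangle inequality $\|\pi_{gh} w - w\| \leq \|\pi_g w - w\| + \|\pi_h w - w\|$).

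The main subtlety lies in the first step: even when $\pi$ has no genuine invariant vectors, $V$ can be a proper but dense $\R$-subspace of $\cH$, and it is precisely the divisibility of $\T$ that converts a single vector $v_0 \notin V$ into a nonzero $\varphi \in \bH$. Such a $\varphi$ will typically be discontinuous, which is what makes the statement nontrivial and why the Bohr compactification (rather than just $\cH^*$) is the right object.
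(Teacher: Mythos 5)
Your proof is correct, and it takes a genuinely different route from the paper's. The paper splits the theorem into two separate propositions with distinct techniques: for the ``has almost invariant vectors'' direction, it passes to a mutually orthogonal sequence $(v_n)$, rescales to $w_n = v_n/\sqrt{\epsilon_n}$, and takes a subnet limit of $\sigma_{w_n}$ in the compact group $\bH$, then verifies nonvanishing by pairing against an explicit vector $w = \tfrac12\sum \epsilon_n w_n$; for the converse, it shows the averaging difference operator $D_\mu = I - P_\mu$ is surjective (self-adjoint, injective, bounded below) so that any fixed $\varphi$ must vanish everywhere. Your argument instead unifies both directions through the single subspace $V = \sum_i (\pi_{g_i}-I)(\cH)$: you show that the existence of a nonzero fixed $\varphi$ is exactly $V \neq \cH$, and that $V \neq \cH$ is exactly the failure of a lower bound on $T^* w = ((\pi_{g_i^{-1}}-I)w)_i$, i.e., the existence of almost invariant vectors. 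The gain is that the paper's explicit limit construction (orthogonalization, rescaling, choosing the test vector $w$) is replaced entirely by the purely algebraic observation that $\T$ is divisible, hence injective as a $\Z$-module: once $V$ is a proper $\R$-subspace, any vector $v_0 \notin V$ yields a homomorphism on $V + \Z v_0$ that extends to all of $\cH$, and no case split on whether a genuine invariant vector exists is needed. The paper's explicit construction does yield the stronger Proposition~\ref{prop:harmonic} about $\mu$-harmonic points, which your argument does not directly recover, but for Theorem~\ref{thm:fixed-point} itself your route is cleaner and, incidentally, does not rely on separability.

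One minor presentational note: your inductive proof that $\varphi|_V = 0$ implies $\varphi((\pi_g - I)v) = 0$ for all $g$ should be phrased as a statement about the operator identity $\varphi \circ (\pi_{g} - I) \equiv 0$ as a function of $v$, since the step for $gh$ uses the hypothesis at the shifted input $\pi_h v$ and the step for $g^{-1}$ uses it at the input $\pi_{g}^{-1}v$; you gesture at this correctly (``pre-composing'') but it is worth making explicit so the induction is visibly on word length of $g$ with the claim quantified over all $v$.
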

Since the action on the Bohr compactification is a complete additive conjugacy invariant, this theorem has an immediate corollary.
\begin{corollary}
\label{cor:additive}
Let $\pi_1$ and $\pi_2$ be additive conjugates. Then $\pi_1$ has almost invariant vectors if and only if $\pi_2$ has almost invariant vectors.
\end{corollary}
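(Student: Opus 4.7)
The plan is to derive the corollary as a direct formal consequence of the two preceding results already quoted in the excerpt: Theorem~\ref{thm:fixed-point} and Claim~\ref{clm:complete-invariant} (which asserts that the $G$-action on the Bohr compactification is a complete additive conjugacy invariant). I do not expect any substantive obstacle here; the work is essentially bookkeeping.

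First I would apply Claim~\ref{clm:complete-invariant} to the hypothesis that $\pi_1$ and $\pi_2$ are additive conjugates, obtaining a conjugacy of the associated topological algebraic actions $G \cc \bH_1$ and $G \cc \bH_2$. Unpacking the definition, this gives a map $\Phi \colon \bH_1 \to \bH_2$ that is simultaneously a homeomorphism, a group isomorphism, and $G$-equivariant in the sense that $\Phi(g\varphi) = g\Phi(\varphi)$ for every $g \in G$ and $\varphi \in \bH_1$.

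Next I would observe that $\Phi$ induces a bijection between the sets of nonzero fixed points of the two actions. Indeed, $G$-equivariance ensures that $\Phi$ carries fixed points to fixed points (and its inverse does the same in the other direction). Because $\Phi$ is a group isomorphism, it sends $0 \in \bH_1$ to $0 \in \bH_2$, so by injectivity it also sends nonzero elements to nonzero elements. Hence $G \cc \bH_1$ has a nonzero fixed point if and only if $G \cc \bH_2$ does.

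Finally, applying Theorem~\ref{thm:fixed-point} to each side closes the loop: $\pi_1$ has almost invariant vectors iff $G \cc \bH_1$ has a nonzero fixed point iff $G \cc \bH_2$ has a nonzero fixed point iff $\pi_2$ has almost invariant vectors. The only step that is at all worth pausing over is the claim that $\Phi$ is a group isomorphism, and in particular sends $0$ to $0$; but this is built into the notion of conjugacy of \emph{topological algebraic} actions used by Claim~\ref{clm:complete-invariant}, so nothing further is needed.
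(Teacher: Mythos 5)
Your proof is correct and follows exactly the route the paper indicates: the authors present the corollary as an immediate consequence of Theorem~\ref{thm:fixed-point} together with Claim~\ref{clm:complete-invariant}, and your argument simply makes that chain of implications explicit, including the small but necessary observation that a conjugacy of topological algebraic actions carries nonzero fixed points to nonzero fixed points.
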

We note that, as far as we know, this is not known even for the case of $G=\Z$. This corollary may be a priori surprising, since almost invariant vectors are defined using the topology of the Hilbert space, whereas this topology does not appear in the definition of additive conjugacy.

By the Hulanicki-Reiter Theorem (see, e.g., \cite[Theorem G.3.2]{bekka2008kazhdan}), $G$ is amenable if and only if the left regular real representation $G\cc^\lambda L^2(G)$ has almost invariant vectors. Hence the following is another corollary of Theorem~\ref{thm:fixed-point}:
\begin{corollary}
$G$ is amenable if and only if there exists a nonzero  homomorphism $\varphi \colon L^2(G) \to \T$ that is invariant to the $G$ action: $\varphi(f) = \varphi(\lambda_g f)$ for all $f \in L^2(G)$ and $g \in G$.
\end{corollary}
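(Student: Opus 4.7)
The plan is to deduce this corollary by stringing together two ingredients: the Hulanicki-Reiter characterization of amenability (which is explicitly invoked in the paragraph preceding the statement) and Theorem~\ref{thm:fixed-point} applied to the particular representation $\pi = \lambda$, the left regular representation of $G$ on $\cH = L^2(G)$. The work of the proof is thus almost entirely a matter of carefully translating the conclusion of Theorem~\ref{thm:fixed-point} in this specific setting.

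More concretely, I would first recall that by the Hulanicki-Reiter Theorem, $G$ is amenable if and only if $G \cc^\lambda L^2(G)$ has almost invariant vectors. Next, by Theorem~\ref{thm:fixed-point}, the existence of almost invariant vectors for $\lambda$ is equivalent to the existence of a nonzero fixed point of the associated action $G \cc bL^2(G)$. So the entire argument reduces to showing that a nonzero fixed point of $G \cc bL^2(G)$ is the same thing as a nonzero homomorphism $\varphi \colon L^2(G) \to \T$ satisfying $\varphi(f) = \varphi(\lambda_g f)$ for all $f \in L^2(G)$ and all $g \in G$.

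I would verify this identification by unwinding definitions. An element $\varphi \in bL^2(G)$ is exactly an additive map $L^2(G) \to \T$ by the description in \eqref{eq:bohr}, and ``nonzero'' in $bL^2(G)$ (as an abelian group) means not identically zero as a homomorphism, matching the statement. The $G$-action is given by $[g\varphi](v) = \varphi(\lambda_{g^{-1}} v)$, so $\varphi$ is a fixed point if and only if $\varphi(\lambda_{g^{-1}} v) = \varphi(v)$ for every $g \in G$ and $v \in L^2(G)$; since $g \mapsto g^{-1}$ is a bijection of $G$, this is exactly the invariance condition $\varphi(\lambda_g f) = \varphi(f)$ in the statement.

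There is no real obstacle: the proof is simply a composition of previously stated facts together with an unwrapping of the definition of the $G$-action on $bL^2(G)$. The only thing to be mildly careful about is distinguishing ``nonzero'' in the topological group $bL^2(G)$ from ``not identically zero'' as a function, but these coincide since the identity of the group $bL^2(G)$ is precisely the constant map $0 \in \T$.
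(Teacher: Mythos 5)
Your proof is correct and follows exactly the route the paper intends: combine the Hulanicki-Reiter characterization of amenability with Theorem~\ref{thm:fixed-point} applied to $\lambda$ on $L^2(G)$, then unwind the definitions of $bL^2(G)$, of ``nonzero,'' and of the fixed-point condition (using that $g \mapsto g^{-1}$ is a bijection of $G$). The paper leaves this unwinding implicit, so your write-up is a faithful elaboration of the same argument.
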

Note that this homomorphism is not necessarily continuous.

\subsection{Proof sketch}
Both directions of the proof of Theorem~\ref{thm:fixed-point} require some work. An important tool is the natural homomorphism $\sigma \colon \cH \to \bH$, which, for a real Hilbert space $\cH$, is given by
\begin{align*}
    [\sigma_v](w) = \lan v,w \ran + \Z.
\end{align*}

When $\pi$ has almost invariant vectors $(v_n)_n$, it is straightforward to show that any limit point of $(\sigma_{v_n})_n$ is a fixed point of $\bH$. However, this fixed point might be zero. To overcome this, we construct from $(v_n)_n$ a modified sequence of almost invariant vectors $(w_n)_n$ such that all limit points of $(\sigma_{w_n})_n$ are nonzero.

When $\pi$ does not have almost invariant vectors, we in fact prove a stronger statement. Given a symmetric probability measure $\mu$ on $G$ whose support is equal to a finite generating set of $G$ containing the identity, we say that $\varphi \in \bH$ is $\mu$-harmonic if 
\begin{align}
\label{eq:harmonic}
    \varphi\left(\sum_{g}\mu(g)\pi_g v\right) = \varphi(v)
\end{align}
for every $v \in \cH$. We prove the following claim, which implies the corresponding direction of Theorem~\ref{thm:fixed-point}.

\begin{proposition}
\label{prop:harmonic}
  Suppose $\pi$ does not have almost invariant vectors and $\mu$ is a symmetric generating measure for $G$. Then $\bH$ has no nonzero $\mu$-harmonic points, and in particular has no nonzero fixed points.
\end{proposition}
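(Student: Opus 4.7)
The plan is to analyze the averaging operator $T \colon \cH \to \cH$ defined by $Tv = \sum_g \mu(g)\pi_g v$. Since $\mu$ is a probability measure and each $\pi_g$ is an isometry, $T$ is bounded with $\|T\|\le 1$, and the symmetry of $\mu$ makes $T$ self-adjoint. The $\mu$-harmonic condition \eqref{eq:harmonic} rewrites as $\varphi(Tv)=\varphi(v)$, or, using the additivity of $\varphi$, as $\varphi$ vanishing on the image of $I-T$. The plan then has two main steps: first, show $\|T\|<1$; second, deduce that $I-T$ is surjective on $\cH$ and conclude $\varphi\equiv 0$.

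For the spectral gap step, since $T$ is self-adjoint its spectral radius equals $\|T\|$, so either $\|T\|$ or $-\|T\|$ lies in $\sigma(T)$. Suppose for contradiction that $\|T\|=1$. If $1\in\sigma(T)$, pick unit vectors $u_n$ with $Tu_n-u_n\to 0$; taking inner products with $u_n$ yields $\sum_g \mu(g)\langle\pi_g u_n,u_n\rangle\to 1$. Since each summand is at most $\mu(g)$ and $\mathrm{supp}(\mu)$ is finite, this forces $\langle\pi_g u_n,u_n\rangle\to 1$ for every $g\in\mathrm{supp}(\mu)$, whence $\|\pi_g u_n-u_n\|\to 0$ for those $g$. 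A telescoping argument along words in the generators then propagates approximate invariance to every $g\in G$, producing almost invariant vectors---a contradiction. If instead $-1\in\sigma(T)$, the analogous computation gives $\sum_g \mu(g)\langle\pi_g u_n,u_n\rangle\to -1$; but the $g=e$ term contributes $\mu(e)>0$ (using $e\in\mathrm{supp}(\mu)$) while every other term is bounded below by $-\mu(g)$, so the sum exceeds $2\mu(e)-1>-1$, again a contradiction.

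With $\|T\|=c<1$ in hand, the Neumann series $(I-T)^{-1}=\sum_{n\ge 0}T^n$ converges in operator norm, so $I-T$ is a bijection of $\cH$. For every $w\in\cH$ write $w=v-Tv$; then additivity and the harmonic condition give
\begin{align*}
\varphi(w)=\varphi(v)-\varphi(Tv)=\varphi(v)-\varphi(v)=0,
\end{align*}
so $\varphi\equiv 0$. The main obstacle I expect is the spectral gap step, which requires careful use of all three assumptions on $\mu$: symmetry (for self-adjointness of $T$), $e\in\mathrm{supp}(\mu)$ (to exclude $-1$ from the spectrum), and $\mathrm{supp}(\mu)$ generating $G$ (to propagate approximate invariance from the support to all of $G$). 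Once $\|T\|<1$ is established, the conclusion is a formal manipulation using only additivity of $\varphi$ and does not require any continuity.
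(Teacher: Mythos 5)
Your proof is correct, and it takes a genuinely different route from the paper's. Both arguments reduce the proposition to showing that the averaging operator $P_\mu$ has norm strictly less than $1$, so that $D_\mu = I - P_\mu$ is surjective and hence any $\mu$-harmonic $\varphi$ vanishes on all of $\cH$. But the mechanisms differ. The paper establishes $\Vert P_\mu v\Vert^2 \le 1 - \tfrac{1}{2}\ep^2\mu(e)\inf_{h\in S}\mu(h)$ for every unit vector $v$ by a direct inner-product computation that singles out the cross term $\mu(h_0)\mu(e)\lan h_0 v, v\ran$ for a well-chosen $h_0 \in S$, relying on the uniform ``$\ep$-separation'' characterization of the absence of almost invariant vectors (a nontrivial equivalence cited from Bekka--de la Harpe--Valette, Proposition~F.1.7); it then deduces surjectivity from a closed-range lemma combined with the dense range coming from self-adjointness and injectivity. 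You instead argue through the spectrum: since $P_\mu$ is self-adjoint, $\Vert P_\mu\Vert$ equals its spectral radius, so $\Vert P_\mu\Vert = 1$ would force $1$ or $-1$ into $\sigma(P_\mu)$; you exclude $1$ directly from the \emph{definition} of having no almost invariant vectors (via the telescoping/word argument over the finite generating set $S$), and you exclude $-1$ via the laziness assumption $\mu(e)>0$, then invoke the Neumann series for surjectivity. Your approach buys a more conceptual spectral-gap statement and avoids the appeal to Proposition~F.1.7 and the closed-range lemma, at the cost of invoking standard but nontrivial spectral theory (spectral radius formula and the fact that the spectrum of a self-adjoint operator coincides with its approximate point spectrum). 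Both arguments use the same three properties of $\mu$ (symmetry, laziness, generating support) at essentially the same points, so the proofs are parallel in structure even if the intermediate technology differs.
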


\subsection{Open questions and additional results}

This paper leaves unanswered the larger question of what properties of a representation are reflected in its Bohr compactification, or, equivalently, what properties are additive conjugacy invariants. 

In \S\ref{sec:Z-reps} we completely classify the additive conjugacy classes of the irreducible unitary representations of $\Z$, showing that these are determined by the Galois conjugacy class of the eigenvalue. In particular, the case of $\Z$ shows that  representations that are not conjugate (in the the usual sense) can be additive conjugates, and that furthermore this holds even for irreducible representations. A similar analysis should apply to $\Z^d$. For other groups, we leave this question for future research.



One may imagine that there is some connection between weak containment and additive conjugacy. Indeed, this is perhaps suggested by Corollary~\ref{cor:additive}. We prove an additional result in this direction. This result can be interpreted to imply that the Bohr compactification records the data of the weakly contained irreducible representations.

\begin{proposition}
\label{prop:weak-containment}
Let $G$ be a finitely generated group. Let $G \cc^\pi \cH$ be an orthogonal representation that weakly contains the irreducible orthogonal representation $G \cc^{\pi'} \cH'$. Then for every $v' \in \cH'$ there are $\varphi \in \bH$ and $v \in \cH$ such that for all $g \in G$,
\begin{align*}
    [g\varphi](v) = \lan \pi'_g  v', v'\ran+\Z.
\end{align*}
\end{proposition}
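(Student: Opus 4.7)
My plan is to choose $v$ by a Baire category argument and then define $\varphi$ on the $\Z[G]$-orbit of $v$ by prescribing values, finally extending to all of $\cH$. Set $\phi(g) := \langle \pi'_g v', v' \rangle$ and extend $\Z$-linearly to $\phi(\alpha) := \langle \pi'(\alpha) v', v' \rangle$ for $\alpha = \sum_g c_g g \in \Z[G]$, where $\pi(\alpha) := \sum_g c_g \pi_g$ and analogously for $\pi'$. Because $\pi'$ is orthogonal, $\phi(g^{-1}) = \phi(g)$, so it will suffice to produce $\varphi \in \bH$ and $v \in \cH$ with $\varphi(\pi(\alpha) v) = \phi(\alpha) + \Z$ for every $\alpha \in \Z[G]$; specializing to $\alpha = g^{-1}$ then yields $[g\varphi](v) = \varphi(\pi_{g^{-1}} v) = \phi(g^{-1}) + \Z = \langle \pi'_g v', v' \rangle + \Z$.

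The key algebraic input will be Fell's characterization of weak containment: $\|\pi'(\alpha)\| \leq \|\pi(\alpha)\|$ for every $\alpha \in \ell^1(G)$, and in particular for $\alpha \in \Z[G]$. Hence $\pi(\alpha) = 0 \Rightarrow \pi'(\alpha) = 0 \Rightarrow \phi(\alpha) = 0 \in \Z$. Contrapositively, whenever $\phi(\alpha) \notin \Z$, the operator $\pi(\alpha) \in B(\cH)$ is nonzero, so $Z_\alpha := \ker \pi(\alpha)$ is a proper closed linear subspace of $\cH$ and hence nowhere dense. Since $G$ is countable, the collection of such $\alpha$ is countable, so by the Baire category theorem there exists $v \in \cH$ outside every such $Z_\alpha$; this $v$ has the crucial property that $\pi(\alpha) v = 0$ implies $\phi(\alpha) \in \Z$ for every $\alpha \in \Z[G]$.

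With $v$ in hand, I define $\varphi$ on the subgroup $M := \pi(\Z[G]) v \subseteq (\cH, +)$ by $\varphi(\pi(\alpha) v) := \phi(\alpha) + \Z$. The defining property of $v$ makes this well-defined (if $\pi(\alpha_1) v = \pi(\alpha_2) v$, then $\pi(\alpha_1 - \alpha_2) v = 0$, so $\phi(\alpha_1) - \phi(\alpha_2) \in \Z$), and $\varphi$ is visibly a group homomorphism $M \to \T$. Because $\T$ is divisible and hence injective as a $\Z$-module, $\varphi$ extends to an additive map $\tilde\varphi \colon \cH \to \T$, i.e., to an element of $\bH$, and the desired identity then follows from the initial reduction.

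The main obstacle will be extracting the kernel inclusion $\pi(\alpha) = 0 \Rightarrow \pi'(\alpha) = 0$ on $\Z[G]$ from the weak-containment hypothesis (for orthogonal representations this can be done by complexification to pass into the unitary setting where Fell's criterion is standard); once that ingredient is in place, the remainder is a soft combination of Baire category in the Polish space $\cH$ with the injectivity of $\R/\Z$ as a $\Z$-module. Note that irreducibility of $\pi'$ does not actually seem to be used.
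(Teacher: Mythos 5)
Your proof is correct, but it takes a genuinely different route from the paper's. The paper's argument is analytic: using \cite[Proposition F.1.4]{bekka2008kazhdan} it extracts a sequence $(v_n)_n$ with $\langle g v_n, v_n\rangle \to \langle \pi'_g v', v'\rangle$, passes to a weak limit $v$, and then splits into two cases --- if $v=0$ it runs a dedicated diagonalization-plus-compactness lemma (Lemma~\ref{lem:weak-to-bohr}) to produce $\varphi$ as a subnet limit of rescaled $\sigma$-images, and if $v\ne 0$ it invokes extremality of the positive-definite function associated to the \emph{irreducible} $\pi'$ to conclude $\langle g v', v'\rangle$ is a scalar multiple of $\langle g v, v\rangle$ and take $\varphi = \sigma_{v/(t-1)}$. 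Your argument is algebraic: Fell's criterion gives the kernel inclusion $\ker\pi(\alpha)\subseteq\ker\pi'(\alpha)$ on $\Z[G]$; a Baire category argument in the separable Hilbert space $\cH$ then picks a vector $v$ avoiding the (countably many) proper closed subspaces $\ker\pi(\alpha)$ for which $\langle\pi'(\alpha)v',v'\rangle\notin\Z$, so the assignment $\pi(\alpha)v\mapsto\langle\pi'(\alpha)v',v'\rangle+\Z$ is a well-defined homomorphism on the subgroup $\pi(\Z[G])v$, which extends to $\bH$ by injectivity of $\T$. Your approach avoids the case split entirely and, as you note, dispenses with irreducibility of $\pi'$, since Fell's criterion needs only weak containment; the paper's proof needs irreducibility in both its citation of F.1.4 and its extreme-point argument. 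One point you flag yourself deserves emphasis: Fell's criterion is standard for unitary representations, and adapting it to the orthogonal setting requires either complexifying both representations (which preserves operator norms, hence the inequality $\|\pi'(\alpha)\|\le\|\pi(\alpha)\|$ descends to the real picture) or verifying that the paper's notion of weak containment for orthogonal representations is defined via complexification; with that in place, your argument is complete and is arguably cleaner than the original.
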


This proposition, together with Theorem~\ref{thm:fixed-point}, suggests the following (perhaps bold) conjecture.
\begin{conjecture}
\label{conj}
Let $G$ be a finitely generated group. Let $G \cc^\pi \cH$ be an orthogonal representation. Then the following are equivalent:
\begin{enumerate}
    \item $\pi$ weakly contains the irreducible orthogonal representation $G \cc^{\pi'} \cH'$. 
    \item   There is a closed, $G$-invariant subgroup $K \subseteq \bH$ such that the topological algebraic actions $G \cc K$ and $G \cc \bH'$ are conjugate.
\end{enumerate}
\end{conjecture}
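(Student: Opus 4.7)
The plan is to construct $\varphi$ and $v$ purely algebraically, avoiding any limit argument in $\bH$. The key input from weak containment is the following implication for $P = \sum_g P(g) \delta_g \in \R[G]$:
\begin{align*}
\pi(P) := \sum_g P(g)\, \pi_g = 0 \text{ on } \cH \ \Longrightarrow\ \sum_g P(g)\, c(g) = 0,
\end{align*}
where $c(g) := \lan \pi'_g v', v' \ran$. By Fell's theorem (using irreducibility of $\pi'$), there exist $v_n \in \cH$ with $\lan \pi_g v_n, v_n \ran \to c(g)$ for each $g$; the finite sum $\sum_g P(g) \lan \pi_g v_n, v_n \ran = \lan \pi(P) v_n, v_n \ran$ is identically zero and passes to the limit.

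Next, I pick $v \in \cH$ to be ``algebraically generic'' in the sense that $\pi(P) v = 0$ for $P \in \Z[G]$ forces $\pi(P) = 0$ on all of $\cH$. The bad set is
\begin{align*}
B := \bigcup_{P \in \Z[G],\ \pi(P) \neq 0} \ker \pi(P),
\end{align*}
a countable union (since $G$ is finitely generated, hence $\Z[G]$ is countable) of proper closed subspaces of $\cH$, and so meager by the Baire category theorem. Any $v \in \cH \setminus B$ has the required property.

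With such a $v$, define $\varphi$ on the orbit $\{\pi_h v : h \in G\}$ by $\varphi(\pi_h v) := c(h) + \Z$ and extend. Well-definedness on the orbit: if $\pi_h v = \pi_{h'} v$, then $(\delta_h - \delta_{h'}) \cdot v = 0$, so by genericity $\pi_h = \pi_{h'}$ on $\cH$, and the key implication applied to $P = \delta_h - \delta_{h'}$ yields $c(h) - c(h') = 0$. The $\Z$-linear extension to the subgroup of $\cH$ generated by the orbit is consistent by the same mechanism: each integer-coefficient relation $\sum_h a_h \pi_h v = 0$ yields $P = \sum_h a_h \delta_h \in \Z[G]$ with $\pi(P) = 0$, hence $\sum_h a_h c(h) = 0 \in \Z$. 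Extend further to a group homomorphism $\cH \to \T$ using that $\T = \R/\Z$ is divisible and thus injective as a $\Z$-module. The verification is immediate: $[g\varphi](v) = \varphi(\pi_g^{-1} v) = c(g^{-1}) + \Z = c(g) + \Z$, using the symmetry $c(g^{-1}) = c(g)$ for orthogonal $\pi'$.

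The main obstacle is the genericity step. The naive version would demand $v \notin \ker \pi(P)$ for every $P \in \R[G]$ with $\pi(P) \neq 0$, but $\R[G]$ is uncountable and Baire cannot be applied directly. The crucial observation is that consistency of $\varphi$ only needs to be checked for relations with integer coefficients---because $\varphi$ lands in $\T = \R/\Z$ and is only required to be additive---which restricts the index set to the countable $\Z[G]$ and makes the category argument go through.
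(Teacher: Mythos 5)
Your argument, as written, is a proof of Proposition~\ref{prop:weak-containment}, not of Conjecture~\ref{conj}. The proposition asks only for a single pair $\varphi \in \bH$, $v \in \cH$ realizing the matrix coefficient $c(g) := \lan \pi'_g v', v'\ran + \Z$ along the $G$-orbit, and that is exactly what you construct. Conjecture~\ref{conj} is much stronger and is, in the paper, an \emph{open conjecture}: it asserts the existence of a closed $G$-invariant subgroup $K \subseteq \bH$ together with a conjugacy of the topological algebraic actions $G \cc K$ and $G \cc \bH'$ (i.e., a topological group isomorphism intertwining the $G$-actions), and it asserts a two-way equivalence. Producing one point $\varphi$ whose orbit realizes one positive-definite function falls well short of producing such a $K$ with an intertwining isomorphism onto $\bH'$ --- taking $K$ to be the closed subgroup generated by the $G$-orbit of $\varphi$ has no evident reason to give an algebraic action conjugate to $G \cc \bH'$ --- and you do not address the direction $(2)\Rightarrow(1)$ at all. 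The authors themselves establish the conjecture only in the degenerate case where $\pi'$ is the trivial representation.

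Taken instead as a proof of Proposition~\ref{prop:weak-containment}, your argument is correct and genuinely different from the paper's. The paper passes to a weakly convergent subsequence of Fell-approximating vectors, splits into cases according to whether the weak limit vanishes, and then either invokes Lemma~\ref{lem:weak-to-bohr} (a carefully tuned subnet limit of $(2^n\sigma_{w_n})_n$ against $v = \sum_n 2^{-n} w_n$) or an extremality argument for positive-definite functions. You instead encode weak containment as the algebraic implication $\pi(P)=0 \Rightarrow \sum_g P(g)\,c(g)=0$, pick a Baire-generic $v$ outside the meager set $\bigcup_{P \in \Z[G],\ \pi(P)\neq 0}\ker\pi(P)$, define $\varphi$ on the $\Z$-span of the orbit by $\varphi(\pi_h v) := c(h)+\Z$ (well-defined by genericity plus the implication), and extend by injectivity of $\T$. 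Your observation that additivity into $\T$ only imposes integer-coefficient constraints --- so the Baire argument runs over the countable $\Z[G]$ rather than the uncountable $\R[G]$ --- is precisely the point that makes the construction go through, and the resulting proof is cleaner than the paper's, avoiding both the subnet lemma and the case split. It just proves the proposition, not the conjecture.
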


\subsubsection*{Acknowledgments}
We would like to thank Todor Tsankov for suggesting some improvements to our proofs, Yehuda Shalom for suggesting to us the  classification of the additive conjugacy classes of the irreducible representations of $\Z$, and Andreas Thom for pointing out an error in an earlier version, as well as suggesting a correct proof. We would also like to thank Joshua Frisch, Eli Glasner, Alexander Kechris, Jesse Peterson, Pooya Vahidi Ferdowsi, Benjamin Weiss, and Andy Zucker for helpful discussions.

\section{Definitions}

\subsection{Orthogonal and unitary representations}
\label{sec:representations}
Let $\cH$ be a separable real Hilbert space equipped with an inner product $\lan\cdot,\cdot\ran$. An {\em orthogonal representation} $\pi$ of a discrete group $G$ is a homomorphism $\pi \colon G \to O(\cH)$, where $O(\cH)$ is the group of orthogonal (i.e., linear and inner product preserving) bijections from $\cH$ to $\cH$. That is, $\pi$ is a group homomorphism of $G$ into the group of linear transformations of $\cH$ that preserve the inner product of $\cH$. We henceforth omit $\pi$ from our notation and write the image of $w \in \cH$ under $\pi_g$ simply as $g w$.

As the next lemma shows, every {\em unitary} representation on a complex Hilbert space $\cH$ is also an orthogonal representation of the associated real Hilbert space.
\begin{lemma}
\label{lemma:unitary}
Let $G \curvearrowright^\pi \cH$ be a unitary representation of $G$ on a complex Hilbert space $\cH$. Let $\widetilde{\cH}$ denote the realification of $\cH$, with inner product $\langle u,v \rangle_{\widetilde{\cH}} := \Re\langle u,v \rangle_\cH$. Let $\widetilde{\pi}: G \curvearrowright \widetilde{\cH}$ be the same action as $\pi$. Then $\widetilde{\pi}$ is an orthogonal representation, and $\pi$ has almost invariant vectors if and only if $\widetilde{\pi}$ does. 
\end{lemma}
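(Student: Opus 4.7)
The plan is that the lemma should follow essentially from unpacking definitions, with the only slightly nontrivial observation being the compatibility of the norms. I would organize the proof in two stages.

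First, I would verify that $\widetilde\pi$ is an orthogonal representation of $G$ on the real Hilbert space $\widetilde\cH$. The space $\widetilde\cH$ is a real inner product space: the form $\langle u,v\rangle_{\widetilde\cH}=\Re\langle u,v\rangle_\cH$ is $\R$-bilinear because $\langle\cdot,\cdot\rangle_\cH$ is $\R$-bilinear in each coordinate and $\Re$ is $\R$-linear, it is symmetric because $\langle v,u\rangle_\cH=\overline{\langle u,v\rangle_\cH}$ and real parts are invariant under conjugation, and it is positive definite because $\langle u,u\rangle_\cH$ is already a nonnegative real number. Completeness of $\widetilde\cH$ follows from completeness of $\cH$ together with the identity $\|u\|_{\widetilde\cH}^2=\Re\langle u,u\rangle_\cH=\langle u,u\rangle_\cH=\|u\|_\cH^2$, so the two norms literally coincide. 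Each operator $\pi_g$ is $\C$-linear, hence $\R$-linear, and preserves $\langle\cdot,\cdot\rangle_\cH$, hence preserves its real part as well. Thus $\widetilde\pi_g\in O(\widetilde\cH)$ and $\widetilde\pi$ is a group homomorphism into $O(\widetilde\cH)$.

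Second, I would derive the almost invariant vectors equivalence directly from the identity $\|u\|_{\widetilde\cH}=\|u\|_\cH$ established above. Unit vectors in $\widetilde\cH$ are exactly the unit vectors in $\cH$, and for any $u\in\cH$ and $g\in G$,
\begin{align*}
    \|\widetilde\pi_g u-u\|_{\widetilde\cH}=\|\pi_g u-u\|_\cH,
\end{align*}
because $\widetilde\pi_g u-u$ is the same vector as $\pi_g u-u$ and the two norms agree on it. Hence a sequence $(u_n)$ of unit vectors in $\cH$ witnesses almost invariance for $\pi$ if and only if the same sequence, viewed in $\widetilde\cH$, witnesses almost invariance for $\widetilde\pi$.

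There is no real obstacle here; the entire content is the observation that passing from a complex inner product to its real part changes neither the norm nor the linearity of the action over $\R$. The only point worth flagging explicitly for the reader is the symmetry of $\Re\langle\cdot,\cdot\rangle_\cH$, which uses Hermitian symmetry, and the coincidence of norms, which makes the ``almost invariant vectors'' condition literally the same condition in the two settings.
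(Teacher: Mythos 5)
Your proof is correct and follows essentially the same approach as the paper's: verify that $\pi_g$ preserves $\Re\langle\cdot,\cdot\rangle_\cH$, then observe that the norms on $\cH$ and $\widetilde\cH$ coincide so that the almost invariant vectors condition is literally the same in both settings. You simply spell out a few more of the routine verifications (symmetry, completeness, $\R$-bilinearity) that the paper leaves implicit.
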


\begin{proof}
For any $v,w \in \widetilde{\cH}$, $\langle gv, gw\rangle_{\widetilde{\cH}} = \Re\langle gv,gw\rangle_\cH = \Re\langle v,w\rangle_\cH = \langle v,w \rangle_{\widetilde{\cH}}$. The equivalence of having almost invariant vectors follows from the fact that the norms on $\cH$ and $\widetilde{\cH}$ are the same and that the actions are the same.
\end{proof}

It follows from Lemma~\ref{lemma:unitary} that it suffices to prove Theorem~\ref{thm:fixed-point} for orthogonal representations to conclude that it also holds for unitary representations. The same holds for Corollary~\ref{cor:additive} and Proposition~\ref{prop:harmonic}.

\subsection{Pontryagin duality and the Bohr compactification}
\label{sec:duality}
A homomorphism of a topological abelian group $A$ into $\T = \R/\Z$ is a map $\varphi$ that satisfies $\varphi(v+w)=\varphi(v)+\varphi(w)$ for all $v,w\in A$. The set of all {\em continuous} such homomorphisms, equipped with the compact-open topology, is called the algebraic dual of $A$ and is denoted by $\hat A$. The Bohr compactification $bA$ of $A$ is $\widehat{\hat A_d}$, where $\hat{A}_d$ is $\hat A$ equipped with the discrete topology. That is, $bA$ is  the set of {\em all} (i.e., not necessarily continuous) homomorphisms from $\hat A$ to $\T$.

The natural map $\tau \colon A \to bA$ given by
\begin{align*}
    [\tau(v)](\gamma) = \gamma(v)
\end{align*}
is known to be injective and continuous when $A$ is locally compact, in which case its image is dense in $bA$. 

Some groups, such as $\R$, can be (non-canonically) identified with their algebraic dual. In this case, $bA$ is simply the set of all homomorphisms from $A$ to $\T$. As we show (Proposition~\ref{prop:H-dual}) this identification holds for a separable real Hilbert space $\cH$. We hence define the Bohr compactification of $\cH$ as in \eqref{eq:bohr}, by letting $\bH$ be the algebraic dual of $\cH$  equipped with the discrete topology.

Since $\bH$ is compact, it follows from the Pontryagin Duality Theorem (see, e.g., \cite{hewitt2012abstract}) that its algebraic dual $\widehat \bH$ can be canonically identified with $\cH$ equipped with the discrete topology. This identification is realized by
\begin{align}
    \label{eq:duals}
    v(\varphi) = \varphi(v).
\end{align}

\subsection{Generating measures and harmonic homomorphisms}
Let $S$ be a finite, symmetric generating set for $G$ containing the identity, and let $\mu$ be a symmetric probability measure whose support is equal to $S$. We call such $\mu$ ``symmetric generating measures".

Let $P_\mu \colon \mathcal{H} \to \mathcal{H}$ be the continuous linear operator given by 
$$
  P_\mu w = \sum_{h \in S} \mu(h) hw,
$$ 
and let $D_u : \mathcal{H} \to \mathcal{H}$ be given by 
$$
  D_\mu w = w-P_\mu w.
$$ 
We say that $\varphi \in \bH$ is $\mu$-harmonic if $\varphi(D_\mu w)=0$ for all $w \in \cH$. By additivity, this is equivalent to the definition above, in \eqref{eq:harmonic}.

\section{General properties of the action on the Bohr compactification}

In this section we outline some simple, general properties of the action on the Bohr compactification and its relation to the representation. 

Given a compact group $A$, an algebraic action $G \cc A$ is a homomorphism from $G$ into the group of continuous group isomorphisms of $A$. Thus, two algebraic actions are conjugate if they are intertwined by a continuous group isomorphism.
\begin{claim}
\label{clm:complete-invariant}
Two orthogonal representations of $G \cc^\pi \cH$ and $G \cc^{\pi'} \cH'$ are additive conjugates if and only if $G \cc \bH$ and $G \cc \bH'$ are conjugate algebraic actions. 
\end{claim}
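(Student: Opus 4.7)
The plan is to use Pontryagin duality to translate between additive bijections of the Hilbert spaces and topological group isomorphisms of their Bohr compactifications. The forward direction is a direct composition argument; the backward direction uses the identification $\widehat{\bH} \cong \cH$ recalled in \S\ref{sec:duality}.

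For the forward direction, given an additive bijection $\xi \colon \cH \to \cH'$ intertwining $\pi$ and $\pi'$, I would define $\Xi \colon \bH \to \bH'$ by $\Xi(\varphi) = \varphi \circ \xi^{-1}$. Additivity of $\xi^{-1}$ makes $\Xi(\varphi)$ land in $\bH'$, and $\Xi$ is a group homomorphism with two-sided inverse $\varphi' \mapsto \varphi' \circ \xi$. Continuity is immediate because the topology on each Bohr compactification is that of pointwise convergence on the corresponding Hilbert space, and pointwise convergence is preserved under precomposition with a fixed map. The $G$-equivariance $\Xi(g\varphi) = g\Xi(\varphi)$ follows, upon evaluation at any $v' \in \cH'$, from the identity $\xi^{-1} \circ \pi'_g = \pi_g \circ \xi^{-1}$, which is the intertwining property of $\xi^{-1}$.

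For the backward direction, suppose $\Xi \colon \bH \to \bH'$ is a continuous group isomorphism intertwining the $G$-actions. By Pontryagin duality, $\widehat{\bH}$ is canonically identified with $\cH$ (discrete) via $v \mapsto (\varphi \mapsto \varphi(v))$, and similarly $\widehat{\bH'} \cong \cH'$. The dual map $\Xi^\ast \colon \widehat{\bH'} \to \widehat{\bH}$ is again a group isomorphism, which under these identifications yields an additive bijection $\cH' \to \cH$; I would take $\xi$ to be its inverse. To confirm that $\xi$ intertwines $\pi$ and $\pi'$, the key step is to verify that the $G$-action on $\widehat{\bH}$ dual to the $G$-action on $\bH$ corresponds to $\pi$ itself under $\cH \cong \widehat{\bH}$. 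A direct unwinding shows that if $\chi_v$ denotes the character $\varphi \mapsto \varphi(v)$, then $(g \cdot \chi_v)(\varphi) = \chi_v(g^{-1}\varphi) = \varphi(\pi_g v) = \chi_{\pi_g v}(\varphi)$, so the dual action realizes $\pi$. The $G$-equivariance of $\Xi$ then transfers to $\xi$ by functoriality of $\Xi \mapsto \Xi^\ast$.

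The only mildly subtle point is this identification of the dual $G$-action on $\widehat{\bH}$ with $\pi$; beyond that, the argument is essentially formal, consisting of routine verifications that precomposition and dualization respect the relevant algebraic, topological, and equivariance properties.
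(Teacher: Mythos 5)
Your proof is correct and follows essentially the same route as the paper: the forward direction defines the push-forward $\Xi(\varphi) = \varphi \circ \xi^{-1}$ (the paper's $\xi_*$), and the backward direction applies Pontryagin duality to identify $\widehat{\bH}$ with $\cH$ and pulls the isomorphism back through the dual map. Your explicit verification that the dual $G$-action on $\widehat{\bH}$ realizes $\pi$ is the same point the paper handles by citing the identification at the end of \S\ref{sec:duality}, just spelled out in more detail.
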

\begin{proof}
Assume first that $\xi\colon \cH \to \cH'$ witnesses the additive conjugacy of $\pi$ and $\pi'$. Then $\cH$ and $\cH'$ are isomorphic as {\em discrete} abelian groups, and hence their algebraic duals $\bH$ and $\bH'$ are isomorphic as topological groups; this is witnessed by $\xi_* \colon \bH \to \bH'$, defined by 
\begin{align*}
    [\xi_*\varphi](v') = \varphi(\xi^{-1}v').
\end{align*}
It is straightforward to check that since $\xi$ intertwines $\pi$ and $\pi'$, it holds that $\xi_*g\varphi = g\xi_*\varphi$, and hence the actions $G \cc \bH$ and $G \cc \bH'$ are conjugate.

Conversely, assume that $\xi_* \colon \bH \to \bH'$ witnesses the conjugacy of $G \cc \bH$ and $G \cc \bH'$. Then in particular $\bH$ and $\bH'$ are isomorphic as topological groups, and hence their algebraic duals $\cH$ and $\cH'$---endowed with the discrete topology---are conjugate (see the end of \S\ref{sec:duality}). This is witnessed by $\xi \colon \cH \to \cH'$, defined by
\begin{align*}
    [\xi v](\varphi') = v(\xi_*^{-1}\varphi).
\end{align*}
It is again straightforward to check that since $\xi_*$ intertwines the actions on $\bH$ and $\bH'$, $\xi$ intertwines $\pi$ and $\pi'$. Likewise, $\xi$ is immediately seen to be additive. 
\end{proof}

Let $m$ be the unique Haar probability measure on the compact group $\bH$. Since $G$ acts on $\bH$ by automorphisms, it preserves $m$, and so $G \cc (\bH,m)$ is a probability measure preserving action.
\begin{claim}
The following are equivalent:
\begin{enumerate}
    \item The action $G \cc (\bH,m)$ is ergodic.
    \item The action $G \cc \bH$ is topologically transitive.
    \item The orbit $\{g v \,\vert\,g \in G\}$ is infinite for every nonzero $v \in \cH$.
\end{enumerate}
\end{claim}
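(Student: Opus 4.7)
The plan is to reduce everything to the Pontryagin duality identification recalled at the end of \S\ref{sec:duality}: the continuous dual of $\bH$ is $\cH$ with the discrete topology, via $v(\varphi) = \varphi(v)$. Passing to complex characters, the maps $\chi_v(\varphi) := e^{2\pi i \varphi(v)}$, $v \in \cH$, form an orthonormal basis of $L^2(\bH, m)$, and from $[g\varphi](w) = \varphi(\pi_{g}^{-1} w)$ one computes $g\chi_v = \chi_{\pi_g v}$. Hence the dual of the action $G \cc \bH$ on characters is the original representation $\pi$. I will then close the cycle $(1) \Rightarrow (2) \Rightarrow (3) \Rightarrow (1)$.

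For $(3) \Rightarrow (1)$, I would expand a $G$-invariant $f \in L^2(\bH, m)$ as $f = \sum_v c_v \chi_v$. Invariance forces the coefficient map $v \mapsto c_v$ to be constant on $G$-orbits in $\cH$, and square-summability then kills every $c_v$ whose orbit is infinite. Under $(3)$ only $c_0$ can survive, so $f$ is a.e.\ constant, and the action is ergodic.

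For $(2) \Rightarrow (3)$, suppose some nonzero $v$ has finite orbit $\{v_1,\ldots,v_n\} \subseteq \cH \setminus \{0\}$. Then $f := \sum_{i} \chi_{v_i}$ is a continuous, $G$-invariant function on $\bH$, and it is nonconstant because the $\chi_{v_i}$ are linearly independent and none of them equals $\chi_0 = 1$. Taking disjoint open $U, V \subseteq \C$ around two distinct values of $f$, the sets $f^{-1}(U)$ and $f^{-1}(V)$ are nonempty, open, disjoint, and $G$-invariant, contradicting topological transitivity.

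Finally, for $(1) \Rightarrow (2)$, let $U, V \subseteq \bH$ be nonempty open. Since Haar measure on a compact group has full support, $m(U), m(V) > 0$; by ergodicity the $G$-invariant set $\bigcup_g gU$ has full measure, hence is dense, and therefore meets $V$, giving $gU \cap V \neq \emptyset$ for some $g \in G$. I do not anticipate serious obstacles: the only real content is the Pontryagin duality identification of $\widehat{\bH}$ with $\cH_d$, already established in the paper, together with standard Fourier analysis on compact abelian groups and the fullness of Haar support.
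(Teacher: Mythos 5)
Your proof is correct, but it takes a different route from the paper. The paper dispatches $(1)\Leftrightarrow(2)$ by citing Schmidt's theorem that ergodicity and topological transitivity coincide for algebraic actions on compact groups, and $(1)\Leftrightarrow(3)$ by citing the Halmos--Kaplansky criterion (non-ergodicity of an automorphism action on a compact abelian group is equivalent to a nonzero dual character having finite orbit), with the Pontryagin identification $\widehat{\bH}\cong\cH_d$ translating that into condition $(3)$. You instead give a self-contained three-step cycle: $(3)\Rightarrow(1)$ is a direct Fourier-series argument re-deriving the Halmos--Kaplansky implication in this setting (invariance forces coefficients constant on orbits, $\ell^2$-summability kills the infinite ones); $(2)\Rightarrow(3)$ constructs a continuous $G$-invariant nonconstant function from a finite orbit and reads off failure of transitivity; and $(1)\Rightarrow(2)$ is the standard full-support-of-Haar argument. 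What the paper's route buys is brevity and an explicit pointer to the general theory; what yours buys is a proof readable without external references and a transparent picture of where each hypothesis enters. Both hinge on the same essential identification $\widehat{\bH}\cong\cH_d$ and the computation $g\cdot\chi_v=\chi_{\pi_g v}$, so the mathematical content is the same.
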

\begin{proof}
  The first two conditions are equivalent by~\cite[Theorem 1.1]{schmidt2012dynamical} (in fact, this equivalence holds more generally for actions by automorphisms on compact groups). By a result of Halmos~\cite{halmos1943automorphisms} for $\Z$ actions, which was extended by Kaplansky to finitely generated groups~\cite{kaplansky1949groups},\footnote{See also \cite[Lemma 1.2 and remark (3) on page 9]{schmidt2012dynamical}.} non-ergodicity of the action $G \cc (\bH,m)$ is equivalent to the existence of a nonzero $\chi \in \widehat\bH$ with a finite $G$-orbit. It follows from Pontryagin duality that the dual $\widehat\bH$ of $\bH$ can be identified with $\cH$, equipped with the discrete topology (see the end of \S\ref{sec:duality}). Thus a nonzero character $\chi \in \widehat\bH$ with a finite orbit is simply equivalent to a nonzero vector $v \in \cH$ with a finite orbit.
\end{proof}

\section{Proofs}

\subsection{Preliminary claims.}

\begin{claim}
\label{clm:harmonic}
Every fixed point of $\bH$ is $\mu$-harmonic for every symmetric generating measure $\mu$.
\end{claim}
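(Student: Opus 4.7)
The plan is to unpack what it means for $\varphi \in \bH$ to be a fixed point, and then use a small but crucial trick: the linearity of the $G$-action, combined with additivity of $\varphi$, lets us commute scalar multiplication with the group action \emph{inside} $\varphi$, even though $\varphi$ itself is only additive (not $\R$-linear).

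First I would write out the fixed-point condition. Since $[g\varphi](v) = \varphi(\pi_g^{-1}v)$, having $g\varphi = \varphi$ for every $g \in G$ is equivalent to $\varphi(\pi_g v) = \varphi(v)$ for all $g \in G$ and $v \in \cH$. The key observation is that this equation holds for \emph{every} $v$, so I may substitute $v := tw$ for an arbitrary $t \in \R$ and $w \in \cH$. Because $\pi_g$ is linear, $\pi_g(tw) = t\,\pi_g w$, and therefore
\begin{equation*}
   \varphi(t \cdot \pi_g w) \;=\; \varphi(tw) \qquad \text{for all } t \in \R,\ g \in G,\ w \in \cH.
\end{equation*}

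With this in hand, the computation is immediate. Fix $w \in \cH$. Using additivity of $\varphi$ on the finite sum defining $P_\mu w$, then applying the displayed identity with $t = \mu(h)$, and finally using additivity once more together with $\sum_{h \in S}\mu(h) = 1$, we get
\begin{equation*}
   \varphi(P_\mu w) \;=\; \sum_{h \in S} \varphi\bigl(\mu(h)\, \pi_h w\bigr) \;=\; \sum_{h \in S} \varphi\bigl(\mu(h)\, w\bigr) \;=\; \varphi\!\left(\sum_{h \in S} \mu(h)\, w\right) \;=\; \varphi(w).
\end{equation*}
Hence $\varphi(D_\mu w) = \varphi(w) - \varphi(P_\mu w) = 0$, which is the definition of $\mu$-harmonicity.

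There is really no obstacle here; the one point worth flagging is why we are allowed to move the real scalar $\mu(h)$ past $\pi_h$ inside $\varphi$. Naively this looks like it requires $\R$-homogeneity of $\varphi$, which is false in general for elements of $\bH$. The resolution is that we never actually pull the scalar out of $\varphi$; we only use that $\pi_g$ commutes with scalar multiplication in $\cH$, and that the fixed-point identity $\varphi(\pi_g v) = \varphi(v)$ is a statement about \emph{all} vectors $v$, including $v = tw$. This is the only subtlety, and once noted the proof reduces to two applications of additivity.
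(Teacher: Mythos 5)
Your proof is correct and follows essentially the same line as the paper's: both arguments commute the scalar $\mu(h)$ past $\pi_h$ using linearity of the representation, apply the fixed-point identity to the rescaled vector $\mu(h)w$, and then recombine with additivity and $\sum_h \mu(h) = 1$. You have simply made explicit the substitution $v := tw$ that the paper leaves implicit when it passes from $\varphi(\mu(g)\,gv)$ to $\varphi(g(\mu(g)v))$.
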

\begin{proof}
Suppose $\mu$ is a symmetric generating measure with support $S$ and $\varphi$ is a fixed point of $\bH$. Then for any $v \in H$,
\begin{align*}
    \varphi(D_{\mu} v) = \varphi\left(v - \sum_{g \in S}{\mu(g) gv}\right)
    = \varphi(v) - \sum_{g \in S}{\varphi(g (\mu(g) v))}.
\end{align*}
Because $\varphi$ is a fixed point, $\varphi(g (\mu(g) v))=\varphi(\mu(g)v)$. Hence,
\begin{align*}
    \varphi(D_\mu v) = \varphi(v) - \sum_{g \in S}{\varphi(\mu(g) v)} = \varphi(v) - \varphi\left(\sum_{g \in S}{\mu(g) v}\right).
\end{align*}
But $\mu$ is a probability measure, and so $\sum \mu(g)v =v$. We thus obtain 
\begin{align*}
    \varphi(D_\mu v) = \varphi(v) - \varphi(v) = 0.
\end{align*}
\end{proof}

\subsection{Proof of main theorem}

\text{}

\vs

\begin{lemma}
  \label{lem:almost-invariant-orthogonal}
  Suppose $\pi$ does not have a nonzero invariant vector. Then if it has almost invariant
  vectors, it has almost invariant vectors that are mutually
  orthogonal.
\end{lemma}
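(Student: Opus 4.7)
The plan is to pass to a weakly-null subsequence of $(u_n)$, then inductively replace each term by its orthogonal component to the span of the previously chosen vectors, using weak convergence to zero to ensure these corrections vanish in the limit.

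First, since the closed unit ball of $\cH$ is weakly sequentially compact, I pass to a subsequence $u_{n_k} \weakly u$. For any $g \in G$, weak continuity of $\pi_g$ combined with $\pi_g u_{n_k} - u_{n_k} \to 0$ in norm (hence weakly) yields $\pi_g u = u$. By hypothesis, $\pi$ has no nonzero invariant vector, so $u = 0$. Replacing $(u_n)$ by this subsequence, I may assume $u_n \weakly 0$.

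Fix a finite generating set $S$ for $G$. I now construct orthonormal $w_1, w_2, \dots$ inductively. Set $w_1 = u_{n_1}$ with $n_1$ so large that $\|\pi_g u_{n_1} - u_{n_1}\| < 2^{-1}$ for all $g \in S$. Given orthonormal $w_1,\dots,w_k$, let $P_k$ denote the orthogonal projection onto $\spn\{w_1,\dots,w_k\}$. Since $P_k$ has finite rank and $u_n \weakly 0$, $\|P_k u_n\| \to 0$; so I can choose $n_{k+1} > n_k$ with both $\|P_k u_{n_{k+1}}\| < 2^{-k-2}$ and $\max_{g \in S}\|\pi_g u_{n_{k+1}} - u_{n_{k+1}}\| < 2^{-k-2}$. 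Set $v_{k+1} = u_{n_{k+1}} - P_k u_{n_{k+1}}$ and $w_{k+1} = v_{k+1}/\|v_{k+1}\|$; the former lies in $\spn\{w_1,\dots,w_k\}^\perp$ by construction, so $w_{k+1}$ is orthogonal to all previous $w_j$ and has unit norm.

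Finally, for $g \in S$, the identity $\pi_g v_{k+1} - v_{k+1} = (\pi_g - I)u_{n_{k+1}} - (\pi_g - I)P_k u_{n_{k+1}}$ combined with the triangle inequality gives $\|\pi_g v_{k+1} - v_{k+1}\| \leq \|\pi_g u_{n_{k+1}} - u_{n_{k+1}}\| + 2\|P_k u_{n_{k+1}}\| < 2^{-k-1}$. Since by Pythagoras $\|v_{k+1}\| \geq \sqrt{1 - 2^{-2k-4}} \to 1$, the normalized sequence $(w_k)$ is almost invariant on $S$, hence on all of $G$ by the standard triangle-inequality extension across products of generators. The main obstacle is the first step, deducing weak convergence to zero from the absence of a nonzero invariant vector; the rest is a routine diagonal extraction exploiting compactness of finite-rank projections.
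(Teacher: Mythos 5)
Your proof is correct and follows essentially the same path as the paper's: pass to a weakly null subsequence using the absence of invariant vectors, then inductively orthogonalize against the span of the previously chosen vectors (using that finite-rank projections send weakly null sequences to norm-null ones) and normalize, checking that almost-invariance survives. The only differences are cosmetic bookkeeping choices of the small parameters; note that your stated bound $\|\pi_g v_{k+1}-v_{k+1}\|<2^{-k-1}$ should really be $<3\cdot 2^{-k-2}$, which is harmless since it still tends to zero.
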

Note that the assumptions of the lemma imply that the Hilbert space is infinite dimensional. In finite dimensional spaces, whenever there are almost invariant vectors there are non-zero invariant vectors, since the unit sphere is compact.
\begin{proof}[Proof of Lemma~\ref{lem:almost-invariant-orthogonal}]
  
Let $(v_n)_n$ be a sequence of almost invariant vectors. Since the unit ball is weakly sequentially compact, there is some subsequence $(v_{n_k})_k$ so that $v_{n_k} \weakly v$ for some $\|v\| \le 1$. For ease, relabel $(v_{n_k})_k = (v_n)_n$. We show $v$ is $G$-invariant: for any $g \in G$, we have $gv_n \weakly gv$ and so $gv_n-v_n \weakly gv-v$. Since $gv_n-v_n \to 0$, we must have $gv-v = 0$. By assumption, $v$ must be equal to $0$. Thus $(v_n)_n$ is a sequence of almost invariant (unit) vectors that weakly converge to $0$. 

\vs

We construct a slightly altered sequence $(w_k)_k$ of mutually orthogonal vectors that are still almost-invariant. Let $w_1 = v_1$. With $w_1,\ldots,w_k$ chosen, choose $m$ large enough so that  $m>k$ and
$$
  |\lan v_{m},w_{i} \ran| < \frac{1}{k}
$$ 
for $1 \le i \le k$, which is possible since, for each $1 \le i \le k$, $\lan w_{i}, v_m \ran \to 0$ as $m \to \infty$. Let $\hat v_{m}$ denote the projection of $v_{m}$ onto the linear space spanned by $w_{1},\dots,w_{k}$, and note that
$$
  \|\hat v_{m}\|^2 = \sum_{i=1}^k |\lan v_{m},w_{i} \ran|^2 < \frac{1}{k}.
$$  
Let
$$
  w_{k+1} = \frac{v_{m}-\hat v_{m}}{\|v_{m}-\hat v_{m}\|}.
$$ 
By construction the $w_k$'s are mutually orthogonal, each with norm $1$. Observe that for any $g \in G$,
\begin{align*}
\|gw_{k+1}-w_{k+1}\| 
&= \frac{1}{\|v_{m}-\hat v_{m}\|}\|gv_{m}-v_{m}+\hat v_{m}-g \hat v_{m}\| \\
&\le \frac{1}{1-1/\sqrt{k}}\left[\|gv_{m}-v_{m}\|+2\|\hat v_{m}\|\right] \\
&\le \frac{1}{1-1/\sqrt{k}}\left[\|gv_{m}-v_{m}\|+\frac{2}{\sqrt{k}}\right].
\end{align*}
Thus, since $m > k$, $(w_k)_k$ is an almost invariant sequence.
\end{proof}

\begin{proposition}
\label{prop:has-almost-invariant}
If $\pi$ has almost invariant vectors, then $\bH$ has a nonzero fixed point. 
\end{proposition}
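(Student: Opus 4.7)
The plan is to split on whether $\pi$ admits a nonzero invariant vector. In the easy case, where some $v_0 \neq 0$ satisfies $gv_0 = v_0$ for all $g$, I claim $\sigma_{v_0} \in \bH$ is a nonzero fixed point: it is $G$-invariant by a short computation using that the action is orthogonal, and nonzero by evaluating it at $w = v_0/(2\|v_0\|^2)$ to get $\tfrac{1}{2}+\Z$. Assume henceforth that $\pi$ has no nonzero invariant vector. Lemma~\ref{lem:almost-invariant-orthogonal} then yields a sequence $(w_n)_n$ of mutually orthonormal almost invariant unit vectors, which I work with below.

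The naive construction---a cluster point of $(\sigma_{w_n})_n$ in the compact space $\bH$---produces a fixed point, because $\|gw_n - w_n\| \to 0$ forces $\sigma_{w_n}(g^{-1}u - u) \to 0$ in $\T$, but it vanishes identically because orthonormality forces $\langle w_n, u\rangle \to 0$ for each fixed $u$. My remedy is to rescale. Enumerating $G = \{g_1, g_2, \ldots\}$ and passing to a subsequence so that $\|g_i w_n - w_n\| \leq 4^{-n}$ for all $i \leq n$, I would set $\alpha_n = 2^n$ and let $\varphi$ be any cluster point of $(\sigma_{\alpha_n w_n})_n$ in $\bH$.

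That $\varphi$ is fixed follows from the identity
\begin{align*}
  \sigma_{\alpha_n w_n}(g^{-1}u - u) = \alpha_n \langle gw_n - w_n, u\rangle + \Z,
\end{align*}
together with the bound $|\alpha_n \langle gw_n - w_n, u\rangle| \leq 2^n \cdot 4^{-n}\|u\| = 2^{-n}\|u\|$ for $n \geq i$ when $g = g_i$, which tends to $0$ in $\R$ and hence in $\T$. That $\varphi$ is nonzero follows from a single carefully chosen ``probe'' vector $v := \sum_n 2^{-n-1} w_n$, which lies in $\cH$ because $\sum_n 4^{-n-1} < \infty$: one computes $\sigma_{\alpha_n w_n}(v) = \alpha_n \cdot 2^{-n-1} + \Z = \tfrac{1}{2} + \Z$ for every $n$, so any cluster point of this sequence evaluates to $\tfrac{1}{2}+\Z$ at $v$.

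The main difficulty, which the scaling trick resolves, is balancing two competing demands on $\alpha_n$: it must grow fast enough that some $\ell^2$-summable probe $v = \sum c_n w_n$ can have $\alpha_n c_n$ bounded away from $0$ modulo $\Z$ (otherwise $\varphi$ vanishes), yet slow enough that $\alpha_n \|gw_n - w_n\| \to 0$ for each $g$ (otherwise $\varphi$ is not fixed). Accelerating the decay of $\|gw_n - w_n\|$ via subsequence extraction creates the slack to pick $\alpha_n$ geometrically, after which $c_n = 1/(2\alpha_n)$ realizes the balance.
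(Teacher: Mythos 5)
Your proof is correct and follows essentially the same strategy as the paper's: split on whether a nonzero invariant vector exists, invoke Lemma~\ref{lem:almost-invariant-orthogonal} to get mutually orthogonal almost invariant unit vectors, rescale them so large that the pairing with a carefully chosen $\ell^2$ probe vector stays at $\tfrac12$, and verify that the rescaling is slow enough (relative to the accelerated decay of $\|gw_n - w_n\|$, arranged by passing to a subsequence) that any cluster point in $\bH$ is fixed. The only differences are cosmetic: the paper scales by $1/\sqrt{\epsilon_n}$ where $\epsilon_n = \max_{h\in S}\|hv_n - v_n\|$ and probes with $\tfrac12\sum_n \epsilon_n w_n$, while you extract a subsequence with $\|g_i w_n - w_n\| \le 4^{-n}$, scale by $2^n$, and probe with $\sum_n 2^{-n-1}w_n$; and the paper works with a finite symmetric generating set $S$ and then invokes generation to conclude $G$-invariance, whereas you enumerate all of $G$ directly. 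Both variants implement the same balancing act you describe in your final paragraph.
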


\begin{proof}
If $\pi$ has a nonzero invariant vector, say $w$, then $\sigma_w$ is clearly a nonzero fixed point of $\bH$. So we may suppose $\pi$ has no nonzero invariant vector. Then, by Lemma \ref{lem:almost-invariant-orthogonal}, we may let $(v_n)_n$ be a sequence of almost invariant (unit) vectors such that for all $m \not = n$, $\langle v_m,v_n \rangle = 0$.

\vs

Fix a finite generating set $S$ for $G$. By passing to a subsequence of $(v_n)_n$ if necessary, we may suppose that for all $n \ge 1$ and all $h \in S$, we have \[ \Vert hv_n-v_n\Vert < \frac{1}{2^n}.\] For each $n \ge 1$, let 
$$
\ep_n = \max_{h \in S} \Vert hv_n-v_n\Vert,
$$
and define $$w_n = \frac{v_n}{\sqrt{\ep_n}}.$$

\vs

Since $\bH$ is compact, there is a subnet $(w_\alpha)_\alpha$ of $(w_n)_n$ so that $(\sigma_{w_\alpha})_\alpha$ converges, say to $\varphi \in \bH$. 

\vs

We first show $\varphi \not = 0$. To this end, let $$w = \frac{1}{2}\sum_{n=1}^\infty \epsilon_n w_n,$$ which is well-defined since $$\sum_{n=1}^\infty |\epsilon_n w_n| = \sum_{n=1}^\infty \sqrt{\epsilon_n} < \sum_{n=1}^\infty \sqrt{2^{-n}} < \infty.$$ Note, for any $n \ge 1$, $$\langle w,w_n \rangle = \frac{1}{2}\epsilon_n\langle w_n,w_n \rangle = \frac{1}{2}$$ by orthogonality. Therefore, $$\varphi(w) = \lim_\alpha \langle w_\alpha,w\rangle+\Z = \frac{1}{2}+\Z$$ shows indeed $\varphi \not = 0$. 

\vs

We finish by showing $\varphi$ is a fixed point of $\bH$. Take $v \in \mc{H}$ and $h \in S$. Then 
$$
  \varphi(h^{-1}v)-\varphi(v) = \lim_\al (\sigma_{w_\al}(h^{-1}v)-\sigma_{w_\al}(v)) = \lim_\al (\langle v,hw_\al\rangle - \langle v,w_\al \rangle+\Z).
$$ 
Now, 
\begin{align*} 
\lim_\al |\langle v,hw_\al\rangle - \langle v,w_\al \rangle|  &= \lim_\al |\langle v,hw_\al-w_\al \rangle| \\ 
  &\le \lim_\al \Vert v \Vert \Vert hw_\al-w_\al \Vert.
\end{align*}
Since $$\Vert hw_n-w_n \Vert \le \sqrt{\ep_n} \to 0,$$ it holds that $$\Vert hw_\al-w_\al \Vert \to 0,$$ and therefore we see $\varphi(h^{-1}v) = \varphi(v)$ for each $h \in S$. Since $S$ is a symmetric generating set, $\varphi$ is a fixed point of $\bH$.
\end{proof}

The folloming lemma is standard; we omit the proof. We will be use it to show $D_\mu$ is surjective.
\begin{lemma} 
\label{lem:closed}
Let $T: \cH \to \cH$ be a bounded linear operator. Suppose there exists some $c > 0$ so that $\Vert Tx\Vert \ge c\Vert x \Vert$ for all $x \in H$. Then the range of $T$ is closed.
\end{lemma}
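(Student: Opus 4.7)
The plan is to use the standard argument that a bounded below linear operator on a Hilbert space automatically has closed range, exploiting the completeness of $\cH$. The bound $\|Tx\| \geq c\|x\|$ essentially lets us pull back convergence in the range to convergence in the domain.

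Concretely, I would take a convergent sequence $(y_n)_n$ in the range of $T$, with $y_n \to y \in \cH$, and show $y \in \mathrm{range}(T)$. Write each $y_n = Tx_n$ for some $x_n \in \cH$. Since $(y_n)_n$ is Cauchy, and the hypothesis gives
\begin{align*}
    \|x_n - x_m\| \leq \tfrac{1}{c}\|T(x_n - x_m)\| = \tfrac{1}{c}\|y_n - y_m\|,
\end{align*}
the sequence $(x_n)_n$ is also Cauchy. By completeness of $\cH$, there exists $x \in \cH$ with $x_n \to x$. Continuity of the bounded operator $T$ then gives $Tx_n \to Tx$, so $y = \lim_n y_n = \lim_n Tx_n = Tx$, placing $y$ in the range of $T$.

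There is essentially no obstacle here; the lemma is a textbook fact and the argument is just three lines. The only mild subtlety is making sure to invoke boundedness of $T$ for the continuity step and the lower bound for the Cauchy transfer --- both are immediate from the hypotheses. No appeal to Hilbert space structure beyond completeness is needed, so the same proof works in any Banach space.
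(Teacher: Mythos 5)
Your proof is correct and is exactly the standard argument; the paper itself omits the proof of this lemma, explicitly calling it standard, so there is nothing to compare against beyond noting that your three-line Cauchy-transfer argument is the canonical one.
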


\begin{proposition}
\label{prop:no-almost-invariant}
If $\pi$ does not have almost invariant vectors, then $D_\mu$ is surjective for any symmetric generating measure $\mu$.
\end{proposition}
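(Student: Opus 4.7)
The plan is to prove surjectivity of $D_\mu$ in three steps: first show $D_\mu$ is bounded below (this is the heart of the argument and invokes the hypothesis); second, deduce that the range is closed via Lemma~\ref{lem:closed} and that the kernel is trivial; third, use self-adjointness of $D_\mu$ to conclude the range is in fact dense, hence all of $\cH$.

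The main step, and the one where I expect the actual content to lie, is establishing that there exists $c>0$ such that $\|D_\mu x\|\ge c\|x\|$ for all $x \in \cH$. I would argue by contradiction. Suppose no such $c$ exists; pick unit vectors $(u_n)_n$ with $\|u_n-P_\mu u_n\|\to 0$. Expanding
$$\|u_n-P_\mu u_n\|^2 = 1 - 2\langle P_\mu u_n,u_n\rangle + \|P_\mu u_n\|^2,$$
and noting that $\|P_\mu u_n\|\le \sum_{h\in S}\mu(h)\|hu_n\| = 1$, one deduces $\langle P_\mu u_n,u_n\rangle\to 1$. But $\langle P_\mu u_n,u_n\rangle = \sum_{h\in S}\mu(h)\langle hu_n,u_n\rangle$ is a convex combination of terms each at most $1$, with strictly positive weights $\mu(h)$ on every $h\in S$; so $\langle hu_n,u_n\rangle\to 1$ for each $h\in S$. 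Therefore $\|hu_n-u_n\|^2 = 2-2\langle hu_n,u_n\rangle\to 0$ for each $h\in S$. Since $S$ generates $G$, a standard triangle-inequality bound on word length propagates this to all of $G$, exhibiting almost invariant vectors for $\pi$ and contradicting the hypothesis.

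Once the lower bound is in hand, Lemma~\ref{lem:closed} says $\mathrm{range}(D_\mu)$ is closed, and the bound also forces $\ker(D_\mu)=\{0\}$. For density of the range I would use that $\mu$ is symmetric: a direct computation using $\mu(g)=\mu(g^{-1})$ shows $P_\mu$, and hence $D_\mu$, is self-adjoint. The general Hilbert-space identity $\overline{\mathrm{range}(T)} = \ker(T^*)^\perp$ then gives
$$\overline{\mathrm{range}(D_\mu)} = \ker(D_\mu)^\perp = \cH.$$
Combined with closedness, $D_\mu(\cH)=\cH$, as required.
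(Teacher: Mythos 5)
Your proof is correct and follows essentially the same route as the paper: bound $D_\mu$ below, get a closed range from Lemma~\ref{lem:closed}, and combine with self-adjointness and injectivity to get dense range, hence surjectivity. The only small variations are that you establish the lower bound by contradiction (extracting almost-invariant vectors from a sequence nearly annihilated by $D_\mu$) rather than by the direct computation with a quantified $\epsilon$ used in the paper, and that you derive injectivity from the lower bound rather than proving it separately; both are sound.
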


\begin{proof}
First observe that $D_\mu$ is self-adjoint since $\mu$ is symmetric. Furthermore, $D_\mu$ is injective since there are no nonzero invariant vectors. Indeed, note that if $D_\mu(w) = 0$ for some unit vector $w$, then 
$$
  w = \sum_h \mu(h) hw
$$
and so
$$
  1 = \sum_h \mu(h) \lan hw, w \ran.
$$
The right hand side is the average of numbers that are at most $1$. Since this average is equal to $1$ they all have to equal $1$, and so (since $\mu$ is generating) $w$ is invariant.

Since $D_\mu$ is self-adjoint and injective, $D_\mu$ has a dense image. Hence, by Lemma~\ref{lem:closed}, it suffices to show the lower bound inequality. Since, for $\Vert v \Vert = 1$, 
$$
  \Vert D_\mu(v)\Vert  = \Vert v-P_\mu(v)\Vert  \ge \Vert v\Vert -\Vert P_\mu(v)\Vert  = 1-\Vert P_\mu(v)\Vert ,
$$ 
it suffices to bound $\Vert P_\mu(v)\Vert $ away from $1$. This is an immediate consequence of \cite[Proposition 6.2.1]{bekka2008kazhdan}, but we include a proof for completeness.

Let $S$ denote the support of $\mu$. Since there are no almost invariant vectors, there is an $\ep > 0$ so that for all  $\Vert v\Vert  = 1$ there exists an $h \in S$ such that $ \Vert hv-v\Vert  \ge \ep$ (see, e.g., \cite[Proposition F.1.7]{bekka2008kazhdan}). Note for such a $v$ and $h$, we have $$\ep^2 \le \Vert hv-v\Vert ^2 = \lan hv-v, hv-v \ran = 1-2\lan v,hv \ran +1$$ and thus $$\lan v,hv \ran \le 1-\frac{1}{2}\ep^2.$$ 

Therefore, for any unit vector $v \in \cH$, taking again $h_0 \in S$ so that $\Vert h_0v-v\Vert  \ge \ep$ gives 

\begin{align*}
\Vert P_\mu v\Vert ^2 
&= \left \lan \sum_{h \in S} \mu(h) hv, \sum_{h' \in S} \mu(h')h'v\right \ran\\
&=\mu(h_0)\mu(e)\lan h_0v,v\ran + \sum_{(h,h') \not = (h_0,e)} \mu(h)\mu(h')\lan hv,h'v \ran \\
&\le \mu(h_0)\mu(e)(1-\frac{1}{2}\ep^2)+\sum_{(h,h') \not = (h_0,e)} \mu(h)\mu(h')\\
&= 1-\frac{1}{2}\mu(h_0)\mu(e)\ep^2.
\end{align*}

Consequently, for each $\Vert v \Vert = 1$, $$\Vert P_\mu v \Vert^2 \le 1-\frac{1}{2}\ep^2 \mu(e)\inf_{h \in S} \mu(h).$$ Since $S$ is finite, we are done.
\end{proof}

We now have the tools to prove Proposition \ref{prop:harmonic}.

\begin{proof}[Proof of Proposition \ref{prop:harmonic}]
Suppose $\pi$ does not have almost invariant vectors, $\mu$ is a symmetric generating measure, and $\varphi$ is a $\mu$-harmonic point of $b\cH$. Then $\varphi(D_\mu(\cH)) = 0$. Now, by Proposition \ref{prop:no-almost-invariant}, since $\pi$ does not have almost invariant vectors, $D_\mu$ is surjective. Hence, $\varphi(\cH) = 0$, and so $\varphi = 0$. Thus, the only $\mu$-harmonic point of $b\cH$ is $0$. Further, since every fixed point of $b\cH$ is $\mu$-harmonic (Claim~\ref{clm:harmonic}), it follows that the only fixed point of $b\cH$ is $0$. 
\end{proof}

We are now ready to prove our main theorem.
\begin{proof}[Proof of Theorem~\ref{thm:fixed-point}]
First, suppose that $\pi$ has almost invariant vectors. Then by Proposition~\ref{prop:has-almost-invariant}, $b\cH$ has a nonzero fixed point. Now, suppose that $b\cH$ has a nonzero fixed point. Then by Proposition~\ref{prop:harmonic}, $\pi$ must have almost invariant vectors.
\end{proof}

\subsection{The Bohr compactification of a separable Hilbert space}

Let $\cH$ be a separable real Hilbert space. Recall that the {\em algebraic dual} of $\cH$ is given by
\begin{align*}
    \widehat{\cH} = \{\phi: \cH \to \T \hs | \hs \phi \text{ is a continuous  homomorphism}\}
\end{align*}
As above, for each $v \in \cH$, let $\sigma_v \colon \cH \to \T$ be given by
$$
\sigma_v(w) = \lan v,w \ran + \Z.
$$

\begin{proposition}
\label{prop:H-dual}
A real separable Hilbert space $\cH$, equipped with the weak topology, can be identified as a topological group with its algebraic dual $\widehat{\cH}$ via $v \mapsto \sigma_v$.
\end{proposition}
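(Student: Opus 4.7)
The plan is to prove the statement in two stages: first that $v \mapsto \sigma_v$ is a bijective group homomorphism onto $\widehat{\cH}$, and then that this identification is a homeomorphism when $\cH$ carries the weak topology and $\widehat{\cH}$ the compact-open topology.

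For the algebraic isomorphism, bilinearity of the inner product immediately yields $\sigma_{v+w} = \sigma_v + \sigma_w$. Injectivity is easy: if $\sigma_v \equiv 0$, then $\lan v, u\ran \in \Z$ for every $u \in \cH$; replacing $u$ by $tu$ for arbitrary $t \in \R$ forces $\lan v, u\ran = 0$ for every $u$, hence $v = 0$. Surjectivity is the substantive step. Given a continuous homomorphism $\phi \colon \cH \to \T$, I would use the no-small-subgroups property of $\T$: by continuity of $\phi$ at $0$ some neighborhood $U$ of the origin maps into $(-1/4, 1/4) + \Z$, and since this arc lifts homeomorphically to $(-1/4, 1/4) \subset \R$, the restriction $\phi|_U$ has a unique continuous lift $\tilde\phi_0 \colon U \to \R$ with $\tilde\phi_0(0)=0$. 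Extend by $\tilde\phi(v) := n\,\tilde\phi_0(v/n)$ for $n$ large enough that $v/n \in U$; additivity of $\phi$ together with uniqueness of lifts shows this is well-defined and a group homomorphism $\cH \to \R$. Continuity plus additivity on a real topological vector space force $\R$-linearity, and Riesz representation then supplies $v \in \cH$ with $\tilde\phi = \lan v, \cdot\ran$, so $\phi = \sigma_v$.

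For the topological comparison I would argue on the level of sub-bases. In one direction, any subbasic weak-open set $\{v : \lan v, w\ran \in (a,b)\}$ is, on each connected component, the $\sigma$-preimage of a compact-open subbasic set $U(\{w\}, \pi((a,b)))$, where $\pi \colon \R \to \T$ is the quotient, so the weak topology is contained in the pulled-back compact-open topology. For the reverse direction, I would first observe that pointwise convergence $\sigma_{v_\alpha}(w) \to \sigma_v(w)$ in $\T$ for every $w$ is already equivalent to weak convergence $v_\alpha \to v$: applying the condition with $w$ replaced by $tw$ for each $t \in \R$ rules out nonzero accumulation points modulo $\Z$ and forces $\lan v_\alpha - v, w\ran \to 0$ in $\R$.

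The main obstacle is then the upgrade from pointwise control to \emph{uniform} control on an arbitrary norm-compact $K \subset \cH$, which is what the compact-open topology demands. Here the key input is that norm-compact subsets of a separable Hilbert space have uniformly small tails in any orthonormal basis, so that $\sup_{w \in K}|\lan v, w\ran|$ is approximated by finitely many weak seminorms up to a small tail error. The delicate point is that weak neighborhoods are unbounded in norm, so controlling the residual $\|v\| \cdot (\text{tail})$ term requires choosing the finite collection of seminorms adaptively from $K$ itself; this is the step I would plan most carefully, and it is the only place in the argument where the Hilbert-space structure is really used beyond the inner product.
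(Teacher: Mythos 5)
Your algebraic half is correct and is essentially the paper's argument in slightly more detail: the paper simply asserts that the unique lift $\psi$ of $\phi$ with $\psi(0)=0$ is continuous and linear and then invokes Riesz, whereas you spell out the no-small-subgroups lifting; both are the same idea. The topological half, however, takes a genuinely different route from the paper, and it is there that your plan has a real gap.

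The paper's topological argument is short and relies on one structural fact: the closed unit ball $B$ of $\cH$ is weakly compact. Since $\sigma$ is continuous (weak convergence gives convergence of $\sigma_{v_\alpha}$) and $B$ is compact while the codomain is Hausdorff, $\sigma|_B$ is automatically a homeomorphism onto its image, and additivity of $\sigma$ is then used to globalize. Your plan instead tries to prove the homeomorphism property directly, by upgrading pointwise convergence of $\sigma_{v_\alpha}$ to uniform convergence mod $\Z$ on norm-compact $K$. The obstruction you flag in your last paragraph is exactly the fatal one: for a \emph{net} $v_\alpha \weakly v$ the norms $\|v_\alpha - v\|$ need not be bounded (this is a sequence-only fact, via uniform boundedness), so the functionals $w \mapsto \lan v_\alpha - v, w\ran$ are not equicontinuous on $K$, and pointwise convergence at finitely many points plus a ``small tail'' estimate does not control $\sup_{w\in K}$. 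Choosing the finite seminorms ``adaptively from $K$'' cannot repair this, because the error term is of the form $\|v_\alpha - v\|\cdot(\text{tail width})$ and the first factor is unconstrained on any weak neighborhood of $0$ (every such neighborhood contains a finite-codimension subspace, hence vectors of arbitrarily large norm). The same issue also undercuts the preliminary claim that pointwise convergence of $\sigma_{v_\alpha}$ already forces weak convergence: your $t$-trick correctly rules out nonzero \emph{finite} accumulation points of $\lan v_\alpha - v, w\ran$, but it does not rule out escape to infinity along a subnet, which is possible for unbounded nets. So as written the proof proposal does not close; you need some mechanism, like the paper's restriction to the weakly compact unit ball, to supply the missing norm control before the uniform estimate on $K$ can go through.
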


\begin{proof}[Proof of Proposition \ref{prop:H-dual}]
We first note that every $\sigma_v$ is an element of $\widehat{\cH}$, i.e., is a continuous homomorphism from $\cH$ to $\T$. This follows from the fact that $w \mapsto \lan v,w\ran$ is weakly continuous, and that the projection $\R \to \T$ is also continuous. We now  show that every continuous homomorphism $\phi : \cH \to \T$ is equal to some $\sigma_w$. 

Given $\phi$, let $\psi \colon \cH \to \R$ be the unique lift of $\phi$ satisfying $\psi(0)=0$. Then $\psi$ is continuous and is easily seen to furthermore be linear. Hence it must be of the form $\psi(v) = \lan v,w \rangle$ for some $w \in \cH$. Since $\psi$ is a lift of $\phi$, $\phi(v) = \langle v,w \rangle + \Z$. We have thus shown that $\phi = \sigma_w$.

Finally, we argue that $v \mapsto \sigma_v$ is continuous and has a continuous inverse. It follows immediately from the definition that if a net $(v_\alpha)_\alpha$ converges weakly to $v$ then $(\sigma_{v_\alpha})_\alpha$ converges to $\sigma_v$. Hence $v \mapsto \sigma_v$ is continuous. Since the unit ball in $\cH$ is weakly compact, the restriction of this map to this ball has an inverse that is also continuous. By the additivity of the map $v \mapsto \sigma_v$, it follows that this map has a continuous inverse.
\end{proof}

\subsection{Proof of Proposition~\ref{prop:weak-containment}}

\begin{lemma}
\label{lem:weak-to-bohr}
Let $\rho: G \to \T$ be a function. Suppose there exist vectors $(v_n)_n$ that weakly converge to $0$, and that for each $g \in G$, $\lim_n \langle gv_n, v_n \rangle+\Z = \rho(g)$. Then there exist $\varphi \in \bH$ and $v \in \cH$ so that $\varphi(g^{-1}v) = \rho(g)$ for each $g \in G$.
\end{lemma}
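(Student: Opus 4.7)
The plan is to choose $v \in \cH$ whose $G$-orbit satisfies only the $\Z$-linear relations forced by operator identities of $\pi$, define $\varphi$ on the subgroup generated by the orbit using $\rho$, and extend $\varphi$ to all of $\cH$ by invoking the injectivity of $\T$ in the category of abelian groups. Notably, the weak convergence $v_n \weakly 0$ does not appear to play an essential role in this approach; what I will use is only the pointwise convergence of the matrix coefficients mod $\Z$ to $\rho$.

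Since $G$ is finitely generated, the integral group ring $\Z[G]$ is countable. Extend $\pi$ linearly to $\Z[G] \to B(\cH)$, $T = \sum_h c_h h \mapsto \sum_h c_h \pi_h$, and let $J$ denote its kernel. For each $T \in \Z[G] \setminus J$, $\ker T \subseteq \cH$ is a proper closed subspace, hence nowhere dense; by the Baire category theorem applied to $\cH$, I may choose $v$ outside $\bigcup_{T \notin J} \ker T$, so that $Tv = 0$ if and only if $T \in J$. Next, $\rho$ respects operator identities: for $T \in J$, orthogonality of $\pi$ yields $\langle v_n, \pi_h v_n\rangle = \langle \pi_{h^{-1}} v_n, v_n\rangle$, and swapping the finite sum with the limit in $\T$ gives
\[ \sum_h c_h\,\rho(h^{-1}) \;=\; \lim_n \bigl(\langle v_n, T v_n\rangle + \Z\bigr) \;=\; 0, \]
since $T v_n = 0$ for all $n$.

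On the subgroup $A := \{Tv : T \in \Z[G]\} \subseteq \cH$, define $\varphi_0(Tv) := \sum_h c_h \rho(h^{-1})$. This is well-defined by the previous paragraph: if $Tv = 0$ then $T \in J$ by the choice of $v$, and hence $\sum_h c_h \rho(h^{-1}) = 0$ in $\T$. By construction, $\varphi_0 : A \to \T$ is a group homomorphism, and since $\T$ is divisible---hence injective in the category of abelian groups---$\varphi_0$ extends to a homomorphism $\varphi : \cH \to \T$, i.e., an element of $\bH$. Finally, $\varphi(g^{-1}v) = \varphi_0(g^{-1}v) = \rho(g)$ for every $g \in G$. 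The main subtlety is establishing well-definedness of $\varphi_0$, which rests on combining the Baire-generic choice of $v$ (to eliminate spurious orbit relations) with the $\T$-valued identities on $\rho$ inherited from the operator identities of $\pi$ via the $v_n$.
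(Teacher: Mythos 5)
Your proof is correct, and it takes a genuinely different route from the paper's. The paper's argument is analytic: using the weak convergence $v_n \weakly 0$, it extracts a subsequence $(w_n)_n$ with rapidly decaying cross inner products, forms $v = \sum_n 2^{-n}w_n$, and takes $\varphi$ to be a subnet limit of $(2^n\sigma_{w_n})_n$ in $\bH$; the proof then works by directly estimating $2^n\sigma_{w_n}(g^{-1}v)$. Your argument is algebraic: you pick $v$ by Baire category so that the only $\Z[G]$-relations among its translates are the universal operator identities, check that $\rho$ kills those identities (using continuity of addition and integer multiplication on $\T$ to pass the limit through the finite sum, and the fact that $Tv_n = 0$ for $T$ in the kernel $J$), and extend the resulting homomorphism on the orbit subgroup to all of $\cH$ by injectivity of $\T$. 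A notable consequence is that your proof never uses $v_n \weakly 0$; you genuinely prove a stronger statement needing only the convergence of matrix coefficients mod $\Z$. That stronger form would in fact let one drop the $v = 0$ / $v \neq 0$ case split in the paper's proof of Proposition~\ref{prop:weak-containment}. What the paper's construction buys, by contrast, is a concrete $\varphi$ realized as a limit of scaled characters $\sigma_{w_n}$ and an explicit $v$ built from the almost-realizing sequence, which is in the spirit of how the Bohr compactification is used elsewhere in the paper; your $\varphi$ is produced via Zorn's lemma and carries no extra structure. Both are valid; yours is shorter and more general for this particular lemma.
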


\begin{proof}
Let $\{g_1,g_2,\dots\}$ be an enumeration of (the countable group) $G$. Let $w_1 = v_1$. With $w_1,\dots,w_{n-1}$ chosen, let $w_n = v_{k_n}$ be such that $|\langle v_{k_n}, gw_j\rangle| < 2^{-n^2}$ for each $1 \le j \le n-1$ and $g \in \{g_1,g_1^{-1},\dots,g_{n-1},g_{n-1}^{-1}\}$, which is possible since $v_n \weakly 0$.  Let $v = \sum_n 2^{-n}w_n$. As $\bH$ is compact, we may take $\varphi$, a limit of some subnet of $(2^n\sigma_{w_n})_n$. Fix $g \in G$. Note, for each $n \ge 1$, $$2^n\sigma_{w_n}(g^{-1}v) = \langle gw_n,w_n\rangle + \sum_{k < n} 2^n2^{-k} \langle g^{-1}w_k,w_n \rangle + \sum_{k > n} 2^n2^{-k} \langle w_k, gw_n \rangle.$$
By construction, if $g = g_l$ and $n > l$, then for
$k < n$, $|\langle g^{-1}w_k,w_n \rangle| \le 2^{-n^2}$ and for $k > n$, $|\langle w_k, gw_n \rangle| \le 2^{-k^2}$. So, we obtain that 
\begin{align*}
    \lim_n 2^n \sigma_{w_n}(g^{-1}v) &= \lim_n{\left(\langle gw_n,w_n\rangle +O\left(n2^n2^{-n^2}+\sum_{k > n} 2^{n-k^2}\right)\right)}\\
    &= \rho(g).
\end{align*}
We conclude $\varphi(g^{-1}v) = \rho(g)$.
\end{proof}

\begin{proof}[Proof of Proposition~\ref{prop:weak-containment}]
Since $\pi$ weakly contains $\pi'$ and $\pi'$ is irreducible, for any $v' \in \cH'$, there exists a sequence of vectors $(v_n)_n$ in $\cH$ such that $\lim{\lan g v_n, v_n \ran} = \lan gv', v' \ran$ for each $g \in G$ (see, e.g., \cite[Proposition F.1.4]{bekka2008kazhdan}). Since the norm of $v_n$ converges to the norm of $v'$, we may assume that $(v_n)_n$ has a weak limit, say $v$. If $v=0$, then the result follows immediately from Lemma \ref{lem:weak-to-bohr}. So we may assume $v \neq 0$.

For each $n$, let $u_n = v_n - v$. For any $g \in G$, it follows from $u_n \weakly 0$ that
\begin{align*}
    \lan gv',v'\ran 
    &=\lim_n{\lan g v_n, v_n \ran} \\
    &= \lim_n{\lan g (u_n + v), (u_n + v) \ran}\\
    &= \lim_n{\lan g u_n, u_n \ran} + \lim_n{\lan g u_n, v \ran} + \lim_n{\lan g v, u_n \ran} + \lim_n{\lan g v, v \ran}\\
    &= \lim_n{\lan g u_n, u_n \ran} + \lan g v, v \ran.
\end{align*}

Since $\pi'$ is irreducible, positive functions associated to it are extreme points in the  cone of positive functions, a cone which is closed with respect to pointwise convergence (see, e.g., \cite[Proposition C.5.2]{bekka2008kazhdan}). Therefore, $\lim_n{\lan g u_n, u_n \ran} = t \lan gv', v' \ran$ for some $t \in \R$. In particular, $\lan gv',v'\ran = \frac{1}{1-t} \lan g v, v \ran$ ($t \neq 1$ since $v \neq 0$). The result follows by taking $\varphi = \sigma_{v/(t-1)}$.
\end{proof}

\section{Additive Conjugacy of Irreducible Representations of $\Z$}
\label{sec:Z-reps}

We would like to thank Yehuda Shalom for suggesting to us the results of this section.

The complex irreducible unitary representations of $\Z$ are one-dimensional, and given by multiplication by a complex number $z \in \C$ with $|z| = 1$. We denote by $\pi^z$ the representation associated to $z$, so that for $n \in \Z$ and $x \in \C$ we have $\pi^z_n x = z^n x$. 

\begin{theorem}
Two irreducible $\Z$ representations $\pi^z$ and $\pi^w$ are additive conjugates if and only if for every $p(x) \in \Z[x]$ it holds that $p(z) = 0$ if and only if $p(w) = 0$.
\end{theorem}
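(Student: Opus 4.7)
My plan is to handle the two directions separately, with the converse being the substantive part. For the forward direction, suppose $\xi \colon \C \to \C$ is an additive bijection satisfying $\xi(zx) = w\xi(x)$ for every $x \in \C$. Since $\xi$ is additive, it is automatically $\Q$-linear; combined with the $\Z$-equivariance (and its consequence $\xi(z^{-1}x) = w^{-1}\xi(x)$), this yields $\xi(p(z)x) = p(w)\xi(x)$ for every $p(x) \in \Z[x]$. If $p(z) = 0$, then $p(w)\xi(x) = 0$ for every $x \in \C$; since $\xi$ is surjective, $p(w)y = 0$ for every $y \in \C$, forcing $p(w) = 0$. Applying the same argument to $\xi^{-1}$ gives the reverse implication.

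For the converse, I assume $z$ and $w$ satisfy the same polynomials in $\Z[x]$, and hence the same polynomials in $\Q[x]$. It cannot happen that one is algebraic and the other is transcendental, so either both are transcendental, in which case $\Q(z) \cong \Q(x) \cong \Q(w)$, or both are algebraic with the same minimal polynomial over $\Q$; in either case there is a field isomorphism $\phi \colon \Q(z) \to \Q(w)$ sending $z$ to $w$. I view $\C$ as a $\Q(z)$-vector space via the inclusion $\Q(z) \subseteq \C$, and likewise as a $\Q(w)$-vector space. Because $\Q(z)$ and $\Q(w)$ are countable while $|\C| = 2^{\aleph_0}$, both vector spaces have dimension $2^{\aleph_0}$, so I can choose bases $\{b_i\}_{i \in I}$ and $\{b_i'\}_{i \in I}$ indexed by the same set.

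I then define $\xi \colon \C \to \C$ by $\xi\left(\sum_i q_i b_i\right) = \sum_i \phi(q_i) b_i'$. This map is additive and bijective by construction, and a short computation gives $\xi(zx) = \sum_i \phi(z q_i) b_i' = w \sum_i \phi(q_i) b_i' = w\xi(x)$, so $\xi$ witnesses the additive conjugacy of $\pi^z$ and $\pi^w$.

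The main obstacle is recognizing that an additive, $\Z$-equivariant map automatically extends from being $\Z[z,z^{-1}]$-semi-linear to being $\Q(z)$-semi-linear: once $p(z) = 0$ iff $p(w) = 0$, the relation $\xi(p(z)x) = p(w)\xi(x)$ forces the definition $\xi(p(z)^{-1} x) = p(w)^{-1} \xi(x)$ to be unambiguous whenever $p(z) \ne 0$. This is what singles out the Galois-theoretic condition as the precise invariant and reduces the existence of $\xi$ to the cardinality-and-basis argument above; once this reduction is in place, the remainder is routine.
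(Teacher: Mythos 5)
Your proof is correct and follows essentially the same route as the paper: the forward direction applies additivity and intertwining to conclude $\xi(p(z)x) = p(w)\xi(x)$, and the converse builds a field isomorphism $\Q(z) \to \Q(w)$ sending $z$ to $w$ and extends it to an additive bijection of $\C$ by transporting a $\Q(z)$-basis to a $\Q(w)$-basis. Your version is slightly more careful in one spot the paper glosses over: you explicitly justify that the two bases can be indexed by the same set by noting that both $\Q(z)$ and $\Q(w)$ are countable, so $\C$ has dimension $2^{\aleph_0}$ over each, whereas the paper just writes $\{x_\alpha\}_\alpha$ and $\{y_\alpha\}_\alpha$ with a shared index without comment.
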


\begin{proof}
First suppose $\pi^z,\pi^w$ are additive conjugates, witnessed by an additive bijection $\xi$. Take $p(y) \in \Z[y]$ with $p(z) = 0$. Write $p(y) \equiv r_ny^n+\dots+r_1y+r_0$. Take $x \in \C$ with $\xi(x) \not = 0$. Then 
\begin{align*}
0 &= \xi(r_nz^nx+\dots+r_1zx+r_0x)\\
&= r_n(w)^n\xi(x)+\dots+r_1 w \xi(x)+r_0\xi(x) \\
&= p(w)\xi(x),
\end{align*}
where the first equality is a consequence of $p(z)=0$, and the second follows from the additivity of $\xi$ and the fact that it intertwines $\pi^z$ and $\pi^w$.

This gives $p(w) = 0$. We have shown every $p(y) \in \Z[y]$ with $p(z) = 0$ has $p(w) = 0$. By symmetry, of course, we get the reverse.

\vs

Now suppose that for every $p(y) \in \Z[y]$, it holds that $p(z) = 0$ if and only if $p(w) = 0$.
It is then well known that there is an isomorphism\footnote{One can take  $\Xi(\frac{p(z)}{q(z)}) = \frac{p(w)}{q(w)}$. It is easy to verify that this is a well-defined isomorphism.} $\Xi: \Q(z) \to \Q(w)$ with $\Xi(z) = w$. 

Let $\{x_\al\}_\al$ be a basis for $\C$ over $\Q(z)$ and $\{y_\al\}_\al$ be a basis for $\C$ over $\Q(w)$. Define $\xi: \C \to \C$ by $$\xi\left(\sum_\al c_\al x_\al\right) = \Xi(c_\al) y_\al,$$ 
which is well-defined (on all of $\C$) since $\{x_\al\}$ is a basis. Also, that $\Xi$ is a field isomorphism mapping $z$ to $w$ implies that $\xi$ is an additive bijection intertwining $\pi^z$ and $\pi^w$:
\begin{align*}
\xi\left(z^n\sum_\al c_\al x_\al\right) =  
\sum_\al \Xi (z^nc_\al)y_\al 
= \sum_\al w^n\Xi(c_\al)y_\al 
= w^n\xi\left(\sum_\al c_\al x_\al\right).
\end{align*}
\end{proof}

\begin{corollary}
The representations $\pi^z$ and $\pi^w$ are additive conjugates if and only if both $z,w$ are not algebraic numbers or they are both algebraic numbers and are Galois conjugates. 
\end{corollary}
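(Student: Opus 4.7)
The approach is to apply the preceding theorem directly and translate its condition, namely that $z$ and $w$ are annihilated by exactly the same integer polynomials, into the standard language of algebraic numbers. I would split the argument into three cases based on whether each of $z,w$ is algebraic or transcendental.

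First I would handle the transcendental case. If $z$ is transcendental, then by definition no nonzero $p \in \Z[x]$ satisfies $p(z)=0$. The theorem's biconditional then forces the same to hold for $w$, so $w$ is transcendental as well; the reverse implication is symmetric and immediate, since if both are transcendental the condition holds vacuously. Next, the mixed case is ruled out: if $z$ is algebraic and $w$ is transcendental (or vice versa), then the minimal polynomial of $z$ (which can be taken in $\Z[x]$ by clearing denominators) vanishes at $z$ but not at $w$, violating the theorem's criterion.

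Finally I would handle the case where both $z$ and $w$ are algebraic. The key point is that the condition over $\Z[x]$ in the theorem is equivalent to the corresponding condition over $\Q[x]$: any $p \in \Q[x]$ with $p(z)=0$ may be scaled to a nonzero integer multiple in $\Z[x]$ with the same roots, and conversely $\Z[x] \subset \Q[x]$. Therefore $z$ and $w$ share the same set of annihilators in $\Q[x]$, which is precisely the principal ideal generated by the minimal polynomial over $\Q$. Hence $z$ and $w$ have equal minimal polynomials over $\Q$, which is the definition of being Galois conjugates. Conversely, Galois conjugate algebraic numbers share a minimal polynomial over $\Q$, hence share all $\Q[x]$-annihilators, hence all $\Z[x]$-annihilators, and the theorem then gives additive conjugacy.

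There is essentially no obstacle here; the proposition is a direct translation of the preceding theorem into the standard vocabulary of algebraic number theory. The only minor point requiring care is the passage between $\Z[x]$ and $\Q[x]$, which is routine.
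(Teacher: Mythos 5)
Your proof is correct, and since the paper states this corollary without any proof (leaving the translation to the reader), your argument is exactly the straightforward unpacking that the authors must have had in mind: the theorem characterizes additive conjugacy by equality of $\Z[x]$-annihilators, and you translate this into the transcendental/algebraic dichotomy via the routine passage between $\Z[x]$ and $\Q[x]$ and the fact that the $\Q[x]$-annihilator ideal is generated by the minimal polynomial. The case analysis (both transcendental, mixed, both algebraic) is clean and complete.
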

Note that it is easy to show that if, for example, $z$ and $w$ are not algebraic then  $\pi^z$ and $\pi^w$ cannot be conjugate representations (in the usual sense) unless $z=w$. Thus this is an example of irreducible representations that are additive conjugates but not conjugates.

\bibliography{refs}

\end{document}